\definecolor{gray}{gray}{0}
\numberwithin{equation}{chapter}
\theoremstyle{plain}
\newtheorem{theorem}{Theorem}[chapter]
\theoremstyle{definition}
\newtheorem{Problem}[theorem]{Problem}
\theoremstyle{remark}
\newtheorem{remark}[theorem]{Remark}
\newtheorem{example}[theorem]{Example}
\newtheorem{problem}[theorem]{Problem}
\numberwithin{equation}{chapter}
\DeclareMathAlphabet{\mathpzc}{OT1}{pzc}{m}{it}
\newcommand{\cA}{\mathcal{A}}
 \newcommand{\sL}{\mathscr{L}}
\newcommand{\MW}{{\mathsf{MW}}}
\newcommand{\R}{{\mathsf{R}}}
\newcommand{\T}{{\mathsf{T}}}
\newcommand{\sU}{{\mathsf{U}}}
\newcommand{\const}{{\mathsf{const}}}
\newcommand{\bH}{{\mathbb{H}}}
\newcommand{\bQ}{{\mathbb{Q}}}
\newcommand{\bR}{{\mathbb{R}}}
\newcommand{\bZ}{{\mathbb{Z}}}
\def\1{\boldsymbol {|}}
\newcommand{\Def}{\mathrel{\mathop:}=}
\newcommand{\Hess}{\operatorname{Hess}}
\newcommand{\mes}{\operatorname{mes}}
\newcommand{\Tr}{\operatorname{Tr}}
\newenvironment{claim}[1][{\textup{(\theequation)}}]{\refstepcounter{equation}\vglue10pt
\begin{trivlist}
\item[{\hskip\labelsep#1}]}{\vglue10pt\end{trivlist}}
\newenvironment{claim*}[1][{}]{\vglue10pt
\begin{trivlist}
\item[{\hskip\labelsep#1}]}{\vglue10pt\end{trivlist}}
\newenvironment{phantomequation}[1][]{\refstepcounter{equation}}{}
\newcounter{note}
\DeclareTextCommand{\textinfty}{PU}{\9042\036}
\DeclareTextCommand{\textge}{PU}{\9042\145}
\DeclareTextCommand{\textle}{PU}{\9042\144}
\DeclareTextCommand{\texthat}{PD1}{\136}
\begin{document}
\title{Schr\"{o}dinger Operator with Strong Magnetic Field: Propagation of singularities and sharper asymptotics}
\author{Victor Ivrii}

\maketitle
{\abstract%
We consider $2$-dimensional Schr\"odinger operator with the non-degenerating magnetic field and we discuss spectral asymptotics with the remainder estimate  $o(\mu^{-1}h^{-1})$ or better.

We also consider $3$-dimensional Schr\"odinger operator with the non-degenerating magnetic field and we discuss spectral asymptotics with the remainder estimate  $o(\mu^{-1}h^{-1})$ or better.

We also consider generalized Schr\"odinger-Pauli operator in the same framework albeit with $\mu h\ge 1$ and derive  spectral asymptotics with the remainder estimate up to $O(h^{-1}|\log h|)$ and with the principal part $\asymp \mu h^{-2}$.
\endabstract}

\setcounter{chapter}{-1}
\chapter{Introduction}
\label{sect-13-0}

Our goal is to derive spectral asymptotics of $2,3$-dimensional Schr\"odinger operator 
\begin{equation}
A=\sum_{j,k} P_jg^{jk}P_k+V,\qquad \text{with\ \ } P_j=hD_j-\mu V_j
\label{13-1-1}
\end{equation}
near the boundary
where
\begin{claim}\label{13-1-2}
$g^{jk}=g^{kj}$, $V_j$ and $V$ are real-valued functions, $ h\in (0,1]$, $\mu \in [1,\infty )$.
\end{claim}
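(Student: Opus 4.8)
The statement \textup{(\ref{13-1-2})} is not a proposition but the standing hypothesis block that pins down the class of operators \textup{(\ref{13-1-1})}; so the honest plan is to observe that there is nothing to demonstrate and instead to record precisely which role each clause plays downstream. First I would note that $g^{jk}=g^{kj}$ is the convention identifying the principal part with a genuine second-order operator built from a (co)metric, and that, combined with the reality of $g^{jk}$, $V_j$ and $V$, it makes $A$ formally symmetric on $C_0^\infty$: since $V_j$ is real, each $P_j=hD_j-\mu V_j$ is symmetric, hence $\sum_{j,k}P_jg^{jk}P_k$ is a real quadratic form in the $P_j$ and $A$ is symmetric.

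Next I would keep deliberately \emph{outside} \textup{(\ref{13-1-2})} the ellipticity condition $g^{jk}\xi_j\xi_k\asymp|\xi|^2$ and the smoothness assumptions on the coefficients: those are to be added where they are actually used, because the microlocal propagation and trace estimates are first obtained under weak local regularity and only afterwards assembled into global asymptotics; self-adjointness (on the natural domain, or via the boundary condition near the boundary mentioned in the text) is likewise to be imposed separately. The two remaining clauses merely name the asymptotic parameters --- $h\in(0,1]$ the small semiclassical parameter and $\mu\in[1,\infty)$ the magnetic intensity, allowed to be arbitrarily large --- which is exactly the ``strong field'' regime in which Landau-level structure (spacing $\asymp\mu h$), magnetic drift, and the target remainder $o(\mu^{-1}h^{-1})$ become meaningful; for the Schr\"odinger--Pauli variant the extra restriction $\mu h\ge 1$ is to be inserted later where needed.

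Thus the ``proof'' is vacuous by design. The only genuine difficulty --- what I would flag as the main obstacle --- is not any argument here but the bookkeeping it foreshadows: every subsequent estimate must track the joint range of $h$ and $\mu$, and in particular whether $\mu h\lesssim 1$ or $\mu h\gtrsim 1$, since the effective semiclassical parameter and the very form of the spectral asymptotics change across $\mu h\sim 1$. Making that case analysis uniform, rather than verifying \textup{(\ref{13-1-2})}, is where the real work of the paper lies.
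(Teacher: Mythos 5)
You are right: \textup{(\ref{13-1-2})} is a standing hypothesis, not a proposition, and the paper gives no proof of it — exactly as you observe. Your reading of the roles of each clause matches the paper's use of them, so nothing further is required.
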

and in $B(0,1)\subset X$ the following conditions are fulfilled:
\begin{phantomequation}\label{0-3}\end{phantomequation}
\begin{equation}
|D^\alpha g^{jk}|\le c,\quad |D^\alpha V_j|\le c,\quad |D^\alpha V|\le c
\quad \qquad \forall \alpha :|\alpha |\le K,
\tag*{$\textup{(\ref*{0-3})}_{1-3}$}
\end{equation}
\begin{equation}
\epsilon _0\le \sum_{j,k} g^{jk}\eta _j\eta _k\cdot |\eta |^{-2}\le c
\quad \forall \eta \in \bR^d\setminus 0 \quad \forall x \in B(0,1)
\label{13-1-4}
\end{equation}
and unless opposite is specified, we always assume that
\begin{equation}
X \supset B(0,1).
\label{13-1-5}
\end{equation}

So we basically want  to generalize results of sections~\ref{book_new-sect-13-3}--\ref{book_new-sect-13-5} of
Chapter~\ref{book_new-sect-13}~\cite{futurebook}\footnote{\label{foot-13-0} This article is a rather small part of the huge project to write a book and is just part of the section~\ref{book_new-sect-13-6} consisting entirely of newly researched results. Chapter~\ref{book_new-sect-13} corresponds to Chapter~6 of its predecessor V.~Ivrii~\cite{Ivr1}. External references by default are to  \cite{futurebook}.}. We assume that condition \begin{equation}
F\ge \epsilon _0
\label{13-2-1}
\end{equation}
where $F$ is the scalar intensity of magnetic field defined by $\textup{(\ref{book_new-13-1-13})}_{2,3}$,  as $d=2,3$ respectively.

What we want is to analyze how improved dynamics results in the sharper remainder estimates.

\section*{Plan of the article}

We start with the two-dimensional case when singularities propagate along magnetic drift lines and then proceed to the three-dimensional case when singularities propagate along magnetic lines which could however have ``side-drift''.

In the main section~\ref{sect-13-6-3} we consider supersharp asymptotics as $d=3$ with the remainder estimate $O(\mu^{-1}h^{-2})$ as $\mu h\le 1$ and $O(h^{-1})$ as $\mu h\ge 1$.

\chapter{Case $d=2$}
\label{sect-13-6-1}

\section{Classical dynamics and heuristics}
\label{sect-13-6-1-1}

Let us start from the classical dynamics. We assume that potential is truly generic i.e. no special conditions with $\nu$ are assumed. To make things simpler we assume at the moment that $g^{jk}=\updelta_{jk}$ and $F=\const$.

Looking at the canonical form we conclude that there is fast circular movement in $(\mu x_2,\xi_2)$ and there is a slow drift movement in $(x_1,\mu^{-1}\xi_1)$. More precisely, consider first two terms of the canonical form:
\begin{equation}
\omega_0 (x_1,\mu^{-1}\xi_1)+ \omega_1(x_1,\mu^{-1}\xi_1) (\mu^2 x_2^2+\xi_2) + O(\mu^{-2})
\label{13-6-1}
\end{equation}
and we understand that along $(\mu x_2,\xi_2)$ there is a circular movement with period $T(\mu)=\pi \mu^{-1}\omega_1^{-1}+O(\mu^{-3})$ and along $(x_1,\mu^{-1}\xi_1$ there is a drift with the velocity
$\mu^{-1}H_{\omega_0 + \omega_1 k}$ with Hamiltonian calculated with respect to $(x_1,\xi_1)$ only and with $k=(\tau-\omega_0) \omega_1$ plugged after (as we are interested at energy level $0$).

Going back with symplectomorphism $\bar{\Psi}$ we conclude that modulo $O(\mu^{-3})$
\begin{equation}
T(\mu)=\pi \mu^{-1}F^{-1}
\label{13-6-2}
\end{equation}
and the drift speed is
\begin{equation}
\mathbf{v}_{\mathsf{drift}}= \mu^{-1}\bigl(\nabla \bigl(\frac{V-\tau}{F}\bigr)\bigr)^\perp
\label{13-6-3}
\end{equation}
which defines \emph{magnetic drift lines\/}\index{magnetic drift lines\/} which are level lines of
$(V-\tau)F^{-1}$~\footnote{\label{foot-13-34} In the special case $F=\const$ magnetic drift lines are level lines of $V$.}.

\begin{remark}\label{rem-13-6-1}
Note that in an invariant form
\begin{equation}
F= \frac{1}{\sqrt{g}} |F_{12}|,\quad F_{12}=\partial_1 V_2-\partial_2 V_1,\quad
g= (g^{11}g^{22}-g^{12}g^{21})^{-1}
\label{13-6-4}
\end{equation}
and magnetic drift is described by
\begin{equation}
\frac{dx_1}{dt}= \frac{1}{\sqrt g} \partial_2 \frac{V}{f},\qquad
\frac{dx_2}{dt}= -\frac{1}{\sqrt g} \frac{V}{f}.
\label{13-6-5}
\end{equation}
where
\begin{equation}
f=\frac{1}{\sqrt{g}} F_{12}
\label{13-6-6}
\end{equation}
is a \emph{pseudo-scalar\/}\footnote{\label{foot-13-35} I.e. it changes sign as we change orientation of the coordinate system.}.
\end{remark}

\begin{remark}\label{rem-13-6-2}
Magnetic drift lines are terminated in the critical points of $(V-\tau)F^{-1}$~\footnote{\label{foot-13-36} In the special case $F=\const$ those are critical points of $V$.}.
\end{remark}

If we are interested in the time $T\gg \mu$ we need to follow magnetic drift line either close to termination point, or to large distance, or assume that it is closed smooth line. However we need to remember that Hamiltonian trajectories follow drift lines albeit do not coincide with them!

\begin{example}\label{ex-13-6-3}
Let $g^{jk}=\updelta_{jk}$, $F=1$.

\medskip\noindent
(i) As $V=x_2$ magnetic drift lines are just straight lines $x_2=\const$ with no termination points and Hamiltonian trajectories are cycloids of figure~\ref{2D-model}(b).

\begin{figure}[h]
\centering
\subfloat[Periodic]{\includegraphics[scale=1]{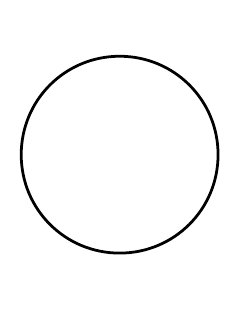}} \qquad
\subfloat[Drift]{\includegraphics[scale=1]{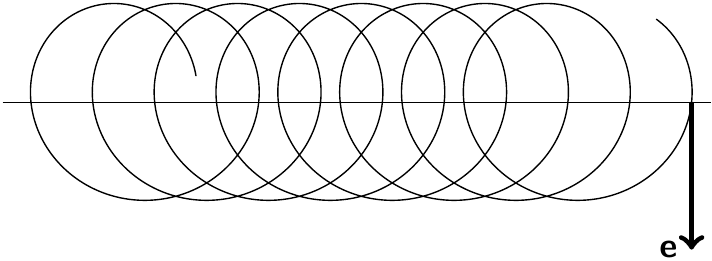}}
\caption{\label{2D-model} Trajectories for $2$-dimensional model operator:\newline a) Unperturbed b) Perturbed by a constant electric field}
\end{figure}

\vglue-10pt
\begin{figure}[h]
\centering
\subfloat[Helix]{\includegraphics[scale=.5]{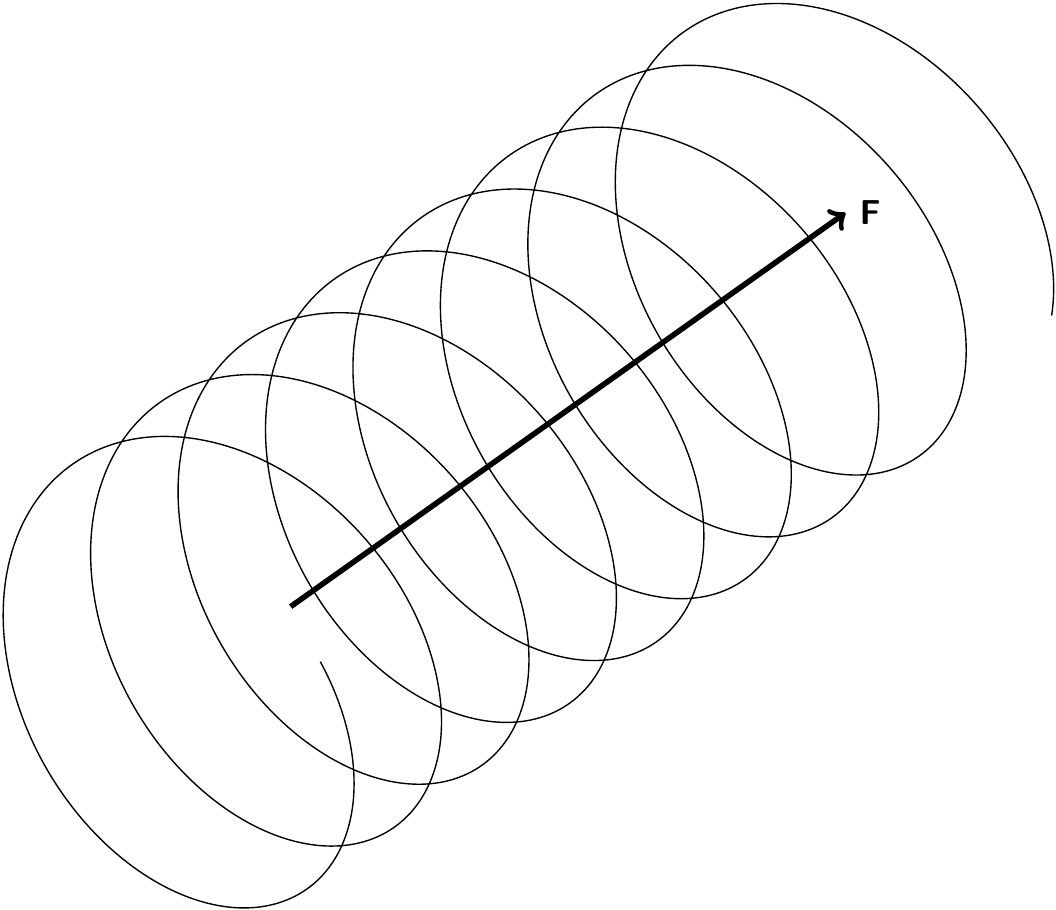}} \qquad
\subfloat[Deformed helix]{\includegraphics[scale=.5]{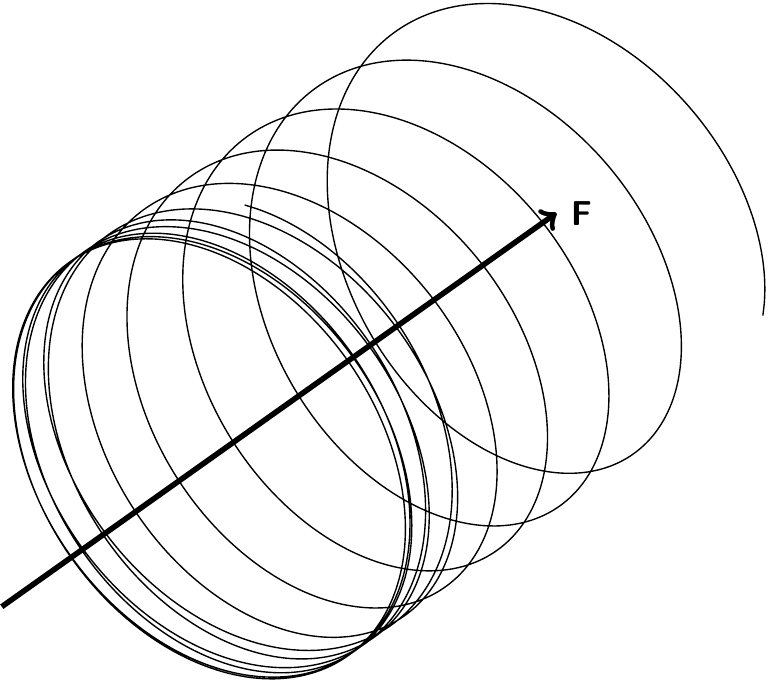}}
\caption{\label{3D-model} Trajectories for $3$-dimensional model operator:\newline a) Unperturbed b) Perturbed by a constant electric field parallel to magnetic lines: non-uniform movement along magnetic lines; drift is negligible in comparison}
\end{figure}

\medskip\noindent
(ii) Consider $V=\pm ( x_1^2 + x_2^2)$. Then magnetic drift lines are circles
$x_1^2+x_2^2=\const$. On the other hand, Hamiltonian trajectories are cycloids with frequencies $2\mu (1+O(\mu^{-2})$ and $2\mu^{-1}+(1+O(\mu^{-2})$ and they are periodic only as $\mu=\mu_k = k^{\frac{1}{2}} (1+ O(k^{-1})$ with 
$k\in \bQ^+$. See figure~\ref{ex-2D}(a),(b).

\medskip\noindent
(iii) Consider $V=x_1x_2$. Then magnetic drift lines are hyperbolas
$x_1x_2=\const$. On the other hand, Hamiltonian trajectories are shown on figure~\ref{ex-2D}(c),(d), one which terminates in $x=0$ and another which bypasses it.
\begin{figure}[h]
\centering
\subfloat[Periodic]{\includegraphics[width=0.45\linewidth]{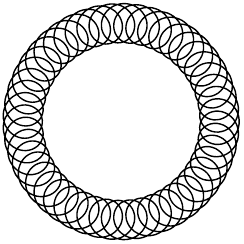}} \qquad
\subfloat[Non-periodic]{\includegraphics[width=0.45\linewidth]{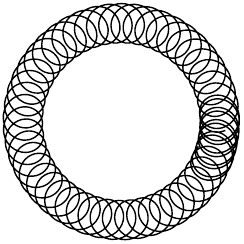}}

\subfloat[Termination]{\includegraphics[width=0.55\linewidth]{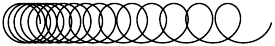}} \qquad
\subfloat[Non-termination]{\includegraphics[width=0.35\linewidth]{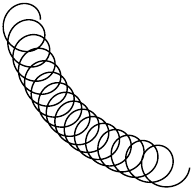}}
\caption{\label{ex-2D} Trajectories for $2$-dimensional model operator to example~\ref{ex-13-6-3}(ii) (in (a),(b)) and~\ref{ex-13-6-3}(iii) (in (c),(d))}
\end{figure}
\end{example}

Our conjecture is that we can derive remainder estimate $o\bigl(\mu^{-1}h^{-1}+1\bigr)$ in all the cases when drift lines are not closed; if drift lines are closed we can derive this remainder estimate for $\mu$ except belonging to rather thin exceptional set.

One should notice that magnetic drift lines escaping to infinity are rather dangerous as then $V$ does not grow which may indicate presence of the essential spectrum.

\chapter{Case $d=3$}
\label{sect-13-6-2}

Now we consider $3$-dimensional case.

\section{Preliminaries and improved reduction}
\label{sect-13-6-2-1}
It is well known from physics that the classical three-dimensional particles in a strong magnetic field move almost along magnetic lines.%
\index{magnetic!line}%
\index{line!magnetic}
So it is not surprising that in our analysis magnetic lines are also important. Let us recall that a magnetic line of the magnetic field $\mathbf{F}=(F^1,F^2,F^3)$ is an integral curve $x=x(t)$ of the field
$F^{-1}\mathbf{F}$. Thus, $x(t)$ is given by the system
\begin{equation}
{\frac{dx_j}{dt\ }}={\frac{2}{F}}F^j\qquad (j=1,2,3)
\label{13-6-32}
\end{equation}
where
\begin{equation}
F=\bigl( \sum_{j,k} g_{jk}F^jF^k\bigr)^{\frac{1}{2}}=
\frac{1}{2} \bigl(\sum_{j,k,l,m} g^{jk}g^{lm}F_{jl}F_{km}\bigr)^{\frac{1}{2}}
\label{13-6-33}
\end{equation}
is the scalar intensity; let us introduce the corresponding flow $\Phi _t$ on $X$.

It follows from the definition of $\mathbf{F}$ that this field is solenoidal, i.e.,
\begin{equation}
\sum_{j}\partial _j(F^j\sqrt{g})=0
\label{13-6-34}
\end{equation}
and therefore
\begin{claim}\label{13-6-35}%
\emph{Magnetic flow\/}\index{flow!magnetic}\index{magnetic!flow} $\Phi_t$ preserves the density $\sqrt{g}dx$ on $X$.
\end{claim}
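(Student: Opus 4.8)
The statement is of Liouville type, so the plan is to verify it through the elementary divergence criterion. Recall that a flow $\Phi_t$ on $X$ with generating vector field $v=(v^1,v^2,v^3)$ preserves a density $\rho\,dx$ if and only if $\sum_j\partial_j(\rho\,v^j)\equiv 0$; this follows from $\frac{d}{dt}\Phi_t^*(\rho\,dx)=\Phi_t^*\bigl(\bigl(\sum_j\partial_j(\rho v^j)\bigr)\,dx\bigr)$ together with $\Phi_0=\mathrm{id}$. So it suffices to evaluate this divergence with $\rho=\sqrt g$ and $v$ the generator read off from \textup{(\ref{13-6-32})}.

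The crux is the solenoidal identity \textup{(\ref{13-6-34})}, $\sum_j\partial_j(\sqrt g\,F^j)=0$, which I would derive invariantly: $\iota_{\mathbf F}(\sqrt g\,dx)$ equals, up to sign, the Hodge dual of $\mathbf F^\flat$, and since $\mathbf F$ is the $g$-dual of $\ast\,dV$ for the magnetic potential $1$-form $V=(V_1,V_2,V_3)$ one gets $\iota_{\mathbf F}(\sqrt g\,dx)=\pm\,dV$, whence $d\bigl(\iota_{\mathbf F}(\sqrt g\,dx)\bigr)=\pm\,d(dV)=0$; in coordinates this is exactly $\sum_j\partial_j(\sqrt g\,F^j)=0$. (One may instead verify it by hand from $F^j=\frac{1}{2\sqrt g}\sum_{k,l}\varepsilon^{jkl}F_{kl}$ with $F_{kl}=\partial_kV_l-\partial_lV_k$, the cross-terms cancelling by antisymmetry of $\varepsilon^{jkl}$.) With this in hand, $\sum_j\partial_j(\sqrt g\,v^j)$ reduces to a single remaining term coming from differentiating the scalar factor relating $v$ to $\mathbf F$ in \textup{(\ref{13-6-32})}, and collecting the pieces yields the invariance.

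The step I expect to be the main obstacle is precisely the handling of that scalar factor $2F^{-1}$: the bare flow of $\mathbf F$ manifestly preserves $\sqrt g\,dx$ straight from \textup{(\ref{13-6-34})}, whereas $\Phi_t$ as in \textup{(\ref{13-6-32})} is a time-reparametrization of it, so one must track carefully which density is genuinely transported along $\Phi_t$ and with what speed factor. The cleanest route is therefore to establish the invariance for the flow of $\mathbf F$ first, where it is immediate, and then transfer it to $\Phi_t$ by the change of parameter, rather than to attack \textup{(\ref{13-6-32})} head-on.
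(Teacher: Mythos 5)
Your derivation of the solenoidal identity \textup{(\ref{13-6-34})} is fine, and the divergence criterion is the right tool; but the last step, where you say the leftover term from the scalar factor $2F^{-1}$ ``collects'' into the invariance, does not close. Writing the generator of \textup{(\ref{13-6-32})} as $v^j=2F^{-1}F^j$ and expanding,
\begin{equation*}
\sum_j\partial_j\bigl(\sqrt g\,v^j\bigr)
=\frac{2}{F}\sum_j\partial_j(\sqrt g\,F^j)
+2\sqrt g\sum_jF^j\,\partial_j\!\bigl(F^{-1}\bigr)
=-\frac{2\sqrt g}{F^2}\,\mathbf F\cdot\nabla F,
\end{equation*}
and the second piece has no reason to vanish: $F$ is not constant along magnetic lines. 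The time-reparametrization argument you sketch in your last paragraph does not rescue this, because rescaling the generator of a measure-preserving flow by a positive function $\omega$ changes the invariant density by the factor $\omega^{-1}$; it does not carry the old density over. The paper's own Remark~\ref{rem-13-6-12} is in fact the precise statement of this phenomenon: for the flow of $\frac{2}{F}\omega\,\mathbf F$ the invariant density is $\omega^{-1}F\sqrt g\,dx$, which at $\omega=1$ (the case \textup{(\ref{13-6-32})}) gives $F\sqrt g\,dx$, not $\sqrt g\,dx$.

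So the gap is genuine, and it exposes a small inconsistency in the source as well: \textup{(\ref{13-6-34})} immediately gives that the flow of the \emph{unnormalized} field $\mathbf F$ preserves $\sqrt g\,dx$ (this is the one-line ``therefore'' the paper intends), whereas the flow $\Phi_t$ of the \emph{normalized} field $2F^{-1}\mathbf F$ of \textup{(\ref{13-6-32})} preserves $F\sqrt g\,dx$. You correctly sensed that the $2F^{-1}$ factor is the crux, but then asserted without computation that it works out; it doesn't. To repair the proof you should either take $\Phi_t$ to be the flow of $\mathbf F$ itself (then \textup{(\ref{13-6-34})} gives \textup{(\ref{13-6-35})} directly), or keep \textup{(\ref{13-6-32})} and replace $\sqrt g\,dx$ by $F\sqrt g\,dx$ in the claim, matching Remark~\ref{rem-13-6-12}.
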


Moreover,
\begin{claim}\label{13-6-36}%
Magnetic flow $\phi_t$ can be lifted to the Hamiltonian flow $\tilde{\Phi}_t$ on $\Lambda $ which is given by the Hamiltonian
\begin{equation}
{\frac{1}{F}}\sum_j F^j\bigl(\xi _j-\mu V_j(x)\bigr).
\label{13-6-37}%
\end{equation}
\end{claim}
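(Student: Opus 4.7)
The plan is a direct computation. I would write $\pi_j = \xi_j - \mu V_j(x)$ for the gauge-covariant momentum and $H(x,\xi) = F^{-1}\sum_j F^j(x)\,\pi_j$, and then compute the Hamiltonian vector field $X_H$ with respect to the standard symplectic form $\omega = \sum_j d\xi_j\wedge dx_j$ on $T^*X$.

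Since $F$ and $F^j$ depend only on $x$, the only $\xi$-dependence of $H$ is through the linear term $F^{-1}F^j\xi_j$, and differentiation gives
\begin{equation*}
\dot x_j = \frac{\partial H}{\partial \xi_j} = \frac{F^j}{F},
\end{equation*}
so the projection $\pi\colon T^*X\to X$ sends the integral curves of $X_H$ to integral curves of $F^{-1}\mathbf{F}$. These are the magnetic lines of \textup{(\ref{13-6-32})} up to the trivial time rescaling $t\mapsto t/2$ coming from the factor of $2$ convention in that equation; in particular $\pi\circ\widetilde\Phi_t$ is a reparametrization of $\Phi_t$. This takes care of the lifting assertion. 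Computing the cotangent component gives
\begin{equation*}
\dot\xi_j = -\frac{\partial H}{\partial x_j}
= -\bigl(\partial_j(F^{-1}F^k)\bigr)\pi_k + \mu F^{-1}F^k\,\partial_j V_k,
\end{equation*}
which is well defined on all of $T^*X$ (away from the zero set of $F$, which is excluded by \textup{(\ref{13-2-1})}), showing that $\widetilde\Phi_t$ is a bona fide flow on $\Lambda$.

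It remains to identify invariance of the base density and of $\Lambda$. Preservation of $\sqrt g\,dx$ on $X$ by $\pi\circ\widetilde\Phi_t$ is exactly \textup{(\ref{13-6-35})}, which has already been recorded from the solenoidality relation \textup{(\ref{13-6-34})}; on the cotangent side the symplectic volume is preserved automatically by any Hamiltonian flow. If $\Lambda$ denotes the characteristic surface $\{H=0\}$ (i.e., the subset of $T^*X$ on which $\pi$ is perpendicular to $\mathbf{F}$, cf.\ the reduction in which $\pi_j$ has no component along the magnetic line), then invariance under $\widetilde\Phi_t$ is immediate from the fact that $H$ is a first integral of its own flow.

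The only conceptual point is the gauge/parametrization check: the factor $2$ in \textup{(\ref{13-6-32})} comes from the doubled symbol of the Schr\"odinger operator in \textup{(\ref{13-1-1})}, so I expect this discrepancy to be absorbed into the conventional choice of time on $\widetilde\Phi_t$. The remaining verification — that the resulting vector field is independent of the choice of magnetic potential $V_j$ modulo a gauge transformation $V_j\mapsto V_j+\mu^{-1}\partial_j\varphi$ (which shifts $H$ by $F^{-1}F^j\partial_j\varphi$, a function pulled back from $X$ along integral curves of $\mathbf{F}$) — is a routine check I would defer to a remark, since it is not strictly needed for the statement.
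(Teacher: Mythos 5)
The paper states (\ref{13-6-36}) without proof, so you are filling in an exercise; the direct Hamiltonian computation is indeed the right route, and the base-projection step is correct: since $H=F^{-1}\sum_j F^j\pi_j$ is affine in $\xi$ with $\xi$-coefficient $F^j/F$, one gets $\dot x_j=F^j/F$ everywhere, independent of $\xi$, so $\pi\circ\widetilde\Phi_t=\Phi_t\circ\pi$ up to a constant time rescaling. The factor-of-two mismatch with \textup{(\ref{13-6-32})} is real and you handle it correctly as a parametrization convention.

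Where the argument is incomplete is the identification of $\Lambda$ and, as a consequence, the tangency of the flow to it. In the book's setup $\Lambda$ is not the characteristic variety $\{H=0\}$ you propose; it is the zero section of the magnetically twisted bundle, $\Lambda=\{\pi_j=\xi_j-\mu V_j=0\ \forall j\}$, for which the projection to $X$ is a diffeomorphism — that is what makes ``the lift'' canonical rather than one of many. With that $\Lambda$, your appeal to ``$H$ is a first integral of its own flow'' does nothing, since $\Lambda$ is codimension $3$, not a level set of $H$. The missing step is to differentiate $\pi_j$ along the flow:
\begin{equation*}
\dot\pi_j=-\partial_j\bigl(F^{-1}F^k\bigr)\,\pi_k
+\mu\,F^{-1}F^k\bigl(\partial_jV_k-\partial_kV_j\bigr)
=-\partial_j\bigl(F^{-1}F^k\bigr)\,\pi_k+\mu\,F^{-1}F^kF_{jk},
\end{equation*}
so that on $\{\pi=0\}$ one has $\dot\pi_j=\mu F^{-1}F^kF_{jk}$, and this vanishes precisely because in three dimensions the vector $\mathbf F$ is the (metric) Hodge dual of the $2$-form $F_{jk}$ and therefore lies in its kernel: $\sum_kF^kF_{jk}=0$. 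That identity is the single nontrivial ingredient in the claim — it is what makes the flow preserve $\Lambda$, and without it the statement ``on $\Lambda$'' is unsupported. Your gauge-covariance remark is correct but secondary; I would replace the paragraph guessing at $\Lambda$ and the invocation of energy conservation with this explicit $\dot\pi$ computation.
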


\begin{remark}\label{rem-13-6-12}
We could define the flow $\Phi _t$ by the system
\begin{equation}
\frac{dx_j}{dt} = \frac{2}{F} \omega F^j \quad (j=1,2,3)
\tag*{$\textup{(\ref*{13-6-32})}'$}\label{13-6-32-'}
\end{equation}
with functions $\omega$ and $\omega^{-1}$ regular along the trajectory; then $\tilde{\Phi}_t$ would be given by the Hamiltonian
\begin{equation}
\frac{\omega}{F} \sum_j F^j\bigl(\xi _j-\mu V_j(x)\bigr)
\tag*{$\textup{(\ref*{13-6-37})}'$}\label{13-6-37-'}
\end{equation}
and the invariant density would be $\omega^{-1} F \sqrt{g} dx$.
\end{remark}

\begin{remark}\label{rem-13-6-14}
Even from the heuristic point of view there is an essential difference:

\medskip\noindent
(i) Magnetic lines depend only on ${\mathbf F}$ while magnetic drift lines depend on $V$ as well. In order to the get magnetic drift lines at the energy level $\tau$ one should replace $V$ by $V-\tau $. Then, for $F\ne \const$ the magnetic drift lines depend on the energy level as well.

\medskip\noindent
(ii) As soon as magnetic lines are defined, the movement along them is defined by the quantum number $j$ and the energy level $\tau $. Moreover, the direction of the movement depends on (\ref{13-6-37}). In contrast, in the two-dimensional case the choice of $j$ is automatic and the movement is defined only by the energy level which should be fixed. System (\ref{13-6-5}) is a dynamical system while (\ref{13-6-32}) only describes geometrical shape.

\medskip\noindent
(iii) The movement along magnetic drift lines is given in scaled time; the real
time is $\mu t$. According to the theory of sections~\ref{book_new-sect-2-2} and~~\ref{book_new-sect-2-4}, $t$ should not be too large. Moreover, under condition \begin{equation}
|D^\alpha \bigl({\frac{V}{F}}\bigr)|\le c\nu \qquad \forall \alpha :1\le |\alpha |\le K,
\label{13-3-85}
\end{equation}
it is sufficient to assume that $\nu t$ is not too large (because we can divide operator by $\nu $ and then the scaled time will be $\nu t$).

\medskip\noindent
(iv) $Vf^{-1}$ is constant along magnetic drift lines and for big enough $\mu $
(e.g., $\mu \ge h ^{-\frac{1}{2}}$) one need only consider lines with
$-V/(\mu hF)\in 2\bZ ^++1$.
\end{remark}

\chapter{Supersharp estimates as $d=3$}
\label{sect-13-6-3}

\section{Framework}
\label{sect-13-6-3-1}

Now instead of escape conditions we assume that all trajectories of $\Psi_{r,t}$ are trapped \underline{either} because all magnetic lines are periodic \underline{or} by the growing potential. We discuss only the latter case but the former could be analyzed in the same way. Idea is that magnetic lines drift with a speed $O(\mu^{-1})$ and because of this drift we can increase $T^*$ from $\asymp 1$ to $\asymp \mu$.

So, let us consider coordinate system in which $F^1=F^2=0$, $F^3\ne 0$ and
\begin{equation}
\frac{V}{F} > 0\qquad\text{as\ \ } |x'| <1, \ |x_3|\ge C_0
\label{13-6-89}
\end{equation}
where $x'=(x_1,x_2)$.

Then $V/F$ reaches its minimum $z^0(x')$ with respect to $x_3$ at $[-C_0,C_0]$; we assume that this minimum is unique and non-degenerate we also assume that it is unique and moreover
\begin{equation}
\epsilon_0 \le \bigl(x_3- z^0(x')\bigr)^{-1}\partial_{x_3}\frac{V}{F} \le C_0.
\label{13-6-90}
\end{equation}
We also assume first that $|\nabla (V/F)|\ge \epsilon$. Then we can extend time $T_*=h^{1-\delta}$ to $T^*=\epsilon$ but now we will try to go beyond using canonical form. First we have a fast magnetron movement in $(x_3,\xi_3)$, then an oscillatory movement in $(x_2,\xi_2)$ and finally a slow magnetic drift movement in $(x_1,\xi_1)$.

Let us denote by $T(x,r)$ the oscillation time with respect to $(x_2,\xi_2)$ on energy level $0$. Then the drift velocity is $\mu^{-1}H'_{b_0+b_1r}$ with $b_0=V\circ \bar{\Psi}$, $b_1=F\circ \bar{\Psi}$ where $H$ means that we consider Hamiltonian field only with respect to $(x_1,\xi_1)$.

Therefore $(x_1,\xi_1)$-shift will be
\begin{equation}
\mu^{-1}\int_0^{T(x_1,\xi_1,r)} H'_{b_0+b_1r}\,dt =
\mu^{-1}\int_{z^-}^{z^+} \frac{H'_{b_0+b_1r}}{\sqrt{-b_0-b_1r}}\,dx_2
\label{13-6-91}
\end{equation}
which in turn is equal to
$-2\mu^{-1}H'_\eta +O(\mu^{-2})$ with
\begin{equation}
\eta (x_1,\xi_1,r)=
\int _{z^-}^{z^+} \bigl( -b_0 - b_1 r\bigr)^{\frac{1}{2}}\, dx_2
\label{13-6-92}
\end{equation}
where $z^\pm=z^\pm (x_1,\xi_1,r)$ are roots of
\begin{equation}
b_0(x_1,\xi_1,z)+ b_1(x_1,\xi_1,z)r=0,
\label{13-6-93}
\end{equation}
$z^- <z^0< z^+$, and 
\begin{claim}\label{13-6-94}
We use the same notation for functions of $(x_1,\xi_1)$ and for functions of $x'=(x_1,x_2)$ as $(x_1,x_2)=\bar{\Psi}(x_1,\xi_1)$,
\end{claim}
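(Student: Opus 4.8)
The statement \textup{(\ref{13-6-94})} is essentially a notational convention rather than an assertion with independent mathematical content, so ``establishing'' it means exhibiting the identification map and checking that it is well defined: namely that, on the region under consideration, $\bar\Psi$ restricts to a genuine (local) diffeomorphism $(x_1,\xi_1)\mapsto(x_1,x_2)$, with the first coordinate unchanged and $x_2$ a smooth function of $(x_1,\xi_1)$. Granting that, a function $W(x')$ of the spatial variable $x'=(x_1,x_2)$ and the function $(x_1,\xi_1)\mapsto W\circ\bar\Psi$ of the drift variables are in bijective smooth correspondence, so using one symbol for both is unambiguous — this is exactly what already happens when such a $W$ (for instance $V$, $F$, $V/F$) appears inside $b_0=V\circ\bar\Psi$, $b_1=F\circ\bar\Psi$, or inside $\eta$ and $T$. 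The plan is therefore: (i) recall that $\bar\Psi$ is the symplectomorphism produced by the reduction to canonical form used just above; (ii) isolate the piece of it acting on the slow/drift block; (iii) verify it is a diffeomorphism there; (iv) note compatibility with footnote~\ref{foot-13-34} and with the frozen $\mu\to\infty$ picture.

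For step (iii) one differentiates $\bar\Psi$ at the base point. The reduction composes the fast magnetron block in $(x_3,\xi_3)$ — whose guiding centre is controlled by the minimum $z^0(x')$ of $V/F$ in $x_3$ — with the oscillatory block in $(x_2,\xi_2)$; the hypotheses \textup{(\ref{13-6-89})}--\textup{(\ref{13-6-90})} and $|\nabla(V/F)|\ge\epsilon$ are exactly what guarantee that both reductions are non-degenerate, that $z^0$ and the turning points $z^\pm$ of \textup{(\ref{13-6-93})} depend smoothly on the parameters, and that the differential of the resulting $(x_1,\xi_1)\mapsto(x_1,x_2)$ has full rank. The inverse function theorem then gives the diffeomorphism on a small neighbourhood (shrinking $C_0$ and $\epsilon$ if necessary), and uniqueness of the minimum $z^0$ rules out any branching. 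Compatibility in (iv) is immediate: under the identification, $V/F$ as a function of $(x_1,\xi_1)$ equals $V/F$ evaluated on the corresponding drift line, so level sets match level sets, which is the $d=2$ picture of footnote~\ref{foot-13-34}, and for frozen drift $\bar\Psi$ degenerates to the identity.

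I do not expect a genuine obstacle; the content is bookkeeping. The one point needing care is keeping track of which reduction step contributes which coordinate, so that the claim ``$\bar\Psi$ acts as $(x_1,\xi_1)\mapsto(x_1,x_2)$'' is stated for the correct slice of the reduced phase space and not confused with the full symplectomorphism, and — correspondingly — confirming that in each formula below the intended domain of a reused symbol is pinned down by context with the two readings agreeing. Should one prefer pedantry over brevity, one could instead introduce distinct notation (say $\widehat W$ for the $(x_1,\xi_1)$-version of $W$) and check agreement formula by formula; but this only clutters the exposition, and \textup{(\ref{13-6-94})} is adopted precisely to avoid it.
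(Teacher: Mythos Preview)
Your opening sentence is exactly right, and that is already the whole story: \textup{(\ref{13-6-94})} is a bare notational convention, and the paper offers no proof of it whatsoever --- it is simply declared in passing between formulas \textup{(\ref{13-6-93})} and \textup{(\ref{13-6-95})}. Everything after your first sentence is additional commentary not present in the paper.

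One small misreading in that commentary is worth flagging. You write that $\bar\Psi$ ``restricts to a genuine (local) diffeomorphism $(x_1,\xi_1)\mapsto(x_1,x_2)$, with the first coordinate unchanged''. The equality $(x_1,x_2)=\bar\Psi(x_1,\xi_1)$ in the claim is not asserting that the first component of $\bar\Psi$ is the identity. Rather, the symbols $x_1$ on the two sides already live in different coordinate systems --- the canonical-form drift variables on the right, the original transversal spatial variables on the left --- and the reuse of the label $x_1$ is itself an instance of the very abuse of notation the claim is announcing. So your step~(iii), checking that the Jacobian has full rank to get a local diffeomorphism with $x_1$ fixed, is aimed at a statement the paper does not make. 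The paper takes for granted (from the canonical-form construction referenced from \cite{futurebook}) that $\bar\Psi$ is a symplectomorphism, hence a diffeomorphism, and simply agrees to suppress the pullback.
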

\begin{gather}
r\le \bar{r}(x_1,\xi_1)= -\frac{b_0}{b_1}(x_1,\xi_1,z^0(x_1,\xi_1))
\label{13-6-95}
\\
\shortintertext{and}
T(x_1,\xi_1,r)= \int_{z^-}^{z^+} \frac{dx_2}{\sqrt{-b_0-b_1r}}
\label{13-6-96}
\end{gather}
is a period of oscillations.
Note that due to (\ref{13-6-90})
\begin{claim}\label{13-6-97}
$|H_\eta|\asymp 1$ as $z^\pm$ are close to $z^0(x_1,\xi_1)$
\end{claim}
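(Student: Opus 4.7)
The plan is to expand $\eta$ near $r = \bar r(x_1,\xi_1)$ by exploiting the non-degenerate extremum of $-b_0/b_1$ at $z^0$, extract a smooth linear factor $\alpha(x_1,\xi_1)(\bar r - r)$, and then identify the leading term of $H_\eta$ as $\alpha\,H_{\bar r}$, with $\alpha \asymp 1$ from (\ref{13-6-90}) and $|H_{\bar r}|\asymp 1$ from the running hypothesis $|\nabla(V/F)|\ge\epsilon$.

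First, (\ref{13-6-90}) and (\ref{13-6-95}) give, smoothly in $(x_1,\xi_1)$, the Morse normal form
\[
-\frac{b_0}{b_1}(x_1,\xi_1,z) = \bar r(x_1,\xi_1) - \tfrac12 \kappa(x_1,\xi_1)(z - z^0)^2 + O\bigl((z-z^0)^3\bigr),\qquad \kappa\asymp 1,
\]
and hence the roots of (\ref{13-6-93}) satisfy $z^{\pm} - z^0 = \pm\sqrt{2(\bar r - r)/\kappa}\,\bigl(1 + O(\sqrt{\bar r - r})\bigr)$. Rescaling $z - z^0 = \sqrt{\bar r - r}\,u$ in (\ref{13-6-92}) converts $\eta$ into
\[
\eta = (\bar r - r)\int_{u^-}^{u^+} \sqrt{\,b_1\bigl(1 - \tfrac12\kappa u^2\bigr) + O\bigl(\sqrt{\bar r - r}\bigr)\,}\,du ,
\]
in which both the integrand and the (compact) limits $u^{\pm}$ depend smoothly on $(x_1,\xi_1,\sqrt{\bar r - r})$. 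Evaluating the leading-order integral via $u = \sqrt{2/\kappa}\,\sin\theta$ and $\int_{-\pi/2}^{\pi/2}\cos^2\theta\,d\theta = \pi/2$ produces
\[
\eta(x_1,\xi_1,r) = \alpha(x_1,\xi_1)(\bar r - r) + O\bigl((\bar r - r)^{3/2}\bigr),\qquad
\alpha = \pi\sqrt{\,b_1/(2\kappa)\,}\text{ evaluated at }z = z^0 ,
\]
and the expansion admits termwise differentiation in $(x_1,\xi_1)$.

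Differentiating yields $H_\eta = \alpha\,H_{\bar r} + (\bar r - r)\,H_\alpha + O(\bar r - r)$, so $H_\eta \to \alpha\,H_{\bar r}$ uniformly as $z^{\pm} \to z^0$. The factor $\alpha$ is $\asymp 1$ since $b_1 = F \ge \epsilon_0$ and $\kappa \asymp 1$. For the other factor, the envelope theorem combined with $\partial_{x_3}(V/F) = 0$ at $z^0$ gives $\nabla_{x'}\bar r \circ \bar{\Psi}^{-1} = \nabla_{x'}(V/F)\big|_{z^0} = \nabla(V/F)|_{z^0}$, whose modulus is $\ge\epsilon$ under the running hypothesis; since $\bar{\Psi}$ is a symplectomorphism with bounded two-sided Jacobian in the region of interest, $|H_{\bar r}|\asymp 1$ follows, which proves (\ref{13-6-97}). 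The delicate point is the termwise smooth dependence of the rescaled integral on $\sqrt{\bar r - r}$ and $(x_1,\xi_1)$ uniformly near the collision $z^{\pm} \to z^0$; this is the standard smoothness of the action integral of a one-dimensional oscillator near a non-degenerate elliptic equilibrium, which was set up by the rescaling above.
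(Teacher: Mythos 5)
The paper does not give a proof of \textup{(\ref{13-6-97})}: it is stated inline, justified only by the phrase ``due to \textup{(\ref{13-6-90})}''. Your proposal supplies the missing computation and it is correct and consistent with that hint. The two ingredients you isolate are exactly the ones the hint points at: condition \textup{(\ref{13-6-90})} makes $-b_0/b_1$ a non-degenerate maximum in the oscillation variable with Hessian $\kappa\asymp 1$ at $z^0$, and together with $b_1=F\circ\bar\Psi\ge\epsilon_0$ from \textup{(\ref{13-2-1})} this forces the harmonic-oscillator action coefficient $\alpha=\pi\sqrt{b_1/(2\kappa)}$ to be $\asymp 1$; the running hypothesis $|\nabla(V/F)|\ge\epsilon$ (stated in the paragraph just above the claim) together with $\partial_{x_3}(V/F)|_{z^0}=0$ and the envelope theorem gives $|H_{\bar r}|\asymp 1$. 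Your rescaling $z-z^0=\sqrt{\bar r-r}\,u$ is the standard way to verify the smooth dependence of the one-dimensional action near a non-degenerate elliptic equilibrium, so the expansion $\eta=\alpha(\bar r-r)+R$ with $H_R\to 0$ as $\bar r-r\to 0$ is justified, and then $H_\eta\to\alpha H_{\bar r}$ with modulus $\asymp 1$.

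Two cosmetic remarks, neither affecting correctness. First, by the evenness of the Morse normal form the action remainder is actually $R=O\bigl((\bar r-r)^2\bigr)$ rather than just $O\bigl((\bar r-r)^{3/2}\bigr)$; your weaker bound already suffices. Second, the displayed intermediate formula $H_\eta=\alpha H_{\bar r}+(\bar r-r)H_\alpha+O(\bar r-r)$ has a redundant middle term which is itself absorbed into the $O(\bar r-r)$; again harmless, but you could simply write $H_\eta=\alpha H_{\bar r}+O(\bar r-r)$.
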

and therefore shift is observable: $\mu^{-1}\gg \mu^{-1}h$. The same is true for any $r$ provided $|\nabla \eta|\asymp 1$ for all $r\le \bar{r}(x_1,\xi_1)$.

Note that we can rewrite this condition
\begin{gather}
|\nabla_{x'} \eta (x',r)|\ge \epsilon_0 \qquad
\forall x', r\le \bar{r}(x')
\label{13-6-98}
\\
\shortintertext{with}
\eta (x',r)= \int _{\ell (x')} \bigl( -V - F r\bigr)^{\frac{1}{2}}\, ds
\label{13-6-99}
\end{gather}
where $x'$ coordinates on the surface transversal to magnetic lines and $ds$ is an element of length along such lines where $\ell(x',r)$ indicates the segment of the magnetic line $\Phi_t(x')$ where $V+rF<0$. Our ``new'' $\eta$ is connected with ``old'' $\eta$ in the obvious way.

Therefore $T^*\asymp 1$ is upgraded to $T^*\asymp \mu$. 

\section{Case $\mu h\le 1$}
\label{sect-13-6-3-2}

The fact that the standard under condition (\ref{13-6-101}) below $T^*\asymp 1$ can be replaced by $T^*\asymp \mu$ without adding new singularities implies that the standard remainder estimate $\R^\T\le Ch^{-2}$ and 
$\R^\MW_\infty\le Ch^{-2}$  could be upgraded to $\R^\T\le C\mu^{-1}h^{-2}$ and $\R^\MW_\infty\le C\mu^{-1}h^{-2}$ respectively as $\mu h\le 1$.  So we arrive to

\begin{theorem}\label{thm-13-6-30}
Let conditions \textup{(\ref{13-1-1})}--\textup{(\ref{13-1-5})}, \textup{(\ref{13-2-1})}, 
\begin{gather}
|V|+|\nabla V|\ge \epsilon _0\qquad \forall x\in B(0,1);
\label{13-3-46}
\shortintertext{and} 
|\nabla \frac{V}{F} |\ge \epsilon_0
\label{13-3-54}
\end{gather}
be fulfilled.

Further, let conditions \textup{(\ref{13-6-89})}, \textup{(\ref{13-6-90})} and \textup{(\ref{13-6-98})} be fulfilled. Finally let non-degeneracy condition
\begin{equation}
|\nabla \bigl(\frac{V}{F}\bigr)|\le \epsilon \implies
|\det \Hess \bigl(\frac{V}{F}\bigr)|\ge \epsilon
\label{13-6-100}
\end{equation}
be fulfilled. Then as $\mu h\le 1$ estimate
\begin{equation}
\R^\MW \le C\mu^{-1}h^{-2}
\label{13-6-101}
\end{equation}
holds.
\end{theorem}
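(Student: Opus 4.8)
The plan is to reduce the supersharp asymptotics to the Tauberian method combined with the canonical form and the long-time propagation analysis already set up in Section~\ref{sect-13-6-3-1}. First I would microlocalize in the standard way: away from the set where the canonical form of Section~\ref{sect-13-6-3-1} is valid — i.e. near the bottoms of the Landau levels and away from the drift-degeneracy points — the operator is elliptic or the classical propagation time $T^*\asymp 1$ already gives the remainder $O(h^{-1})\ll \mu^{-1}h^{-2}$ (recall $\mu h\le 1$), so the only contribution to be controlled is the zone $\{|\nabla(V/F)|\le\epsilon\}$ where condition \textup{(\ref{13-6-100})} ensures the Hessian is non-degenerate, together with the complementary zone where $|\nabla(V/F)|\ge\epsilon$ and the reduction of Section~\ref{sect-13-6-3-1} applies verbatim.

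Second, in the good zone $\{|\nabla(V/F)|\ge\epsilon\}$, I would invoke the key conclusion of Section~\ref{sect-13-6-3-1}: under \textup{(\ref{13-6-89})}, \textup{(\ref{13-6-90})}, \textup{(\ref{13-6-98})}, the $(x_1,\xi_1)$-shift over one oscillation period equals $-2\mu^{-1}H'_\eta+O(\mu^{-2})$ with $|\nabla\eta|\asymp1$, so the drift is observable and no new singularities appear for $|t|\le T^*\asymp\mu$. Feeding $T^*\asymp\mu$ into the standard Tauberian estimate $\R^\T\le Ch^{-1}(T^*)^{-1}\cdot h^{-1}$-type bound (more precisely replacing the usual $h^{-2}$ by $(T^*)^{-1}h^{-2}$) yields $\R^\T\le C\mu^{-1}h^{-2}$ there; passing from $\R^\T$ to $\R^\MW$ costs only the usual successive-approximation error which is $O(\mu^{-1}h^{-1})$ or smaller under \textup{(\ref{13-3-46})}, \textup{(\ref{13-3-54})}, hence absorbed.

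Third, in the bad zone $\{|\nabla(V/F)|\le\epsilon\}$, which under \textup{(\ref{13-6-100})} is a union of $O(1)$ small discs around non-degenerate critical points of $V/F$, I would perform the rescaling $x'\mapsto (x'-\bar x')/\gamma$ with $\gamma$ the distance to the critical point, so that on the shell $\{|x'-\bar x'|\asymp\gamma\}$ one has $|\nabla(V/F)|\asymp\gamma$; the effective semiclassical parameters become $h_{\eff}=h/\gamma^2$, $\mu_{\eff}=\mu\gamma^2$, and the propagation time gains a factor $\gamma^{-1}$ from the slower drift, so $T^*\asymp\mu\gamma$. Summing the contributions of the dyadic shells $\gamma=2^{-n}$ down to the scale $\gamma\asymp\max(\mu^{-1/2}, (h/\mu)^{1/3})$ at which the canonical reduction breaks, the geometric series in $n$ converges and the total is again $O(\mu^{-1}h^{-2})$; the innermost core contributes at most its volume times $h_{\eff}^{-2}\mu_{\eff}^{-1}\cdot(\text{core measure})$, which one checks is also $O(\mu^{-1}h^{-2})$.

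The main obstacle I expect is the rescaling near the critical points of $V/F$: one has to verify that conditions \textup{(\ref{13-6-89})}--\textup{(\ref{13-6-98})} survive the rescaling (in particular that $\eta$ and $\bar r$ behave correctly and that the magnetron movement in $(x_3,\xi_3)$ and the oscillation in $(x_2,\xi_2)$ remain fast and of full rank after scaling), and that the drift, although degenerating, still does not create periodic trajectories over the rescaled time $T^*\asymp\mu\gamma$ — this uses the non-degeneracy of the Hessian \textup{(\ref{13-6-100})} to guarantee that near a non-degenerate critical point the drift lines are hyperbolas or ellipses on which the return time is $\gtrsim|\log\gamma|^{-1}\mu\gamma$ or longer, still long enough. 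Everything else — the Tauberian comparison, the successive-approximation error, the sum over dyadic shells — is routine bookkeeping along the lines of Sections~\ref{book_new-sect-2-2} and~\ref{book_new-sect-2-4}.
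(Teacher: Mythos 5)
Your good-zone argument matches the paper's proof: under conditions \textup{(\ref{13-3-54})} and especially \textup{(\ref{13-6-98})} the drift is uniformly observable, so the time $T^*$ can be taken $\asymp\mu$ everywhere on the relevant energy shell, and feeding this into the standard Tauberian inequality $\R^\T\le C h^{-2}/T^*$ gives $\R^\T\le C\mu^{-1}h^{-2}$, with the passage to $\R^\MW$ being the ``drop the extra terms'' step. That much is correct and is essentially what the paper does.

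However, your ``bad zone'' $\{|\nabla(V/F)|\le\epsilon\}$ and the dyadic rescaling you set up there are spurious in the context of this particular theorem. Theorem~\ref{thm-13-6-30} explicitly assumes both \textup{(\ref{13-3-54})}, which forces $|\nabla(V/F)|\ge\epsilon_0$ everywhere (so for $\epsilon<\epsilon_0$ the zone you describe is empty), and \textup{(\ref{13-6-98})}, which forces $|\nabla_{x'}\eta|\ge\epsilon_0$ uniformly for all $r\le\bar r$; under these hypotheses there are no drift-degeneracy points to decompose around. The $\gamma$-admissible partition and dyadic rescaling you describe are precisely the additional machinery needed for the \emph{next} result, Theorem~\ref{thm-13-6-32}, where \textup{(\ref{13-6-98})} is removed and replaced by the Hessian condition \textup{(\ref{13-6-107})} on $\eta$; the paper sets this up in the paragraph following Theorem~\ref{thm-13-6-30} and in Problem~\ref{problem-13-6-31}. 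You have also conflated the non-degeneracy of $V/F$ (condition \textup{(\ref{13-6-100})}, whose Hessian appears in the magnetic Weyl correction terms and governs the $\R^\MW_\infty\to\R^\MW$ reduction) with the non-degeneracy of $\eta$ (conditions \textup{(\ref{13-6-98})}, \textup{(\ref{13-6-107})}, which govern the drift propagation): these are different functions with different roles, and your rescaling argument targets the wrong one. So the essential content of your proposal is right, but it is padded with an unnecessary and misattributed construction, and the role of condition \textup{(\ref{13-6-100})} is misplaced.
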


\begin{proof}
We leave to the reader an easy exercise to prove that all extra terms in $\R^\MW_\infty$ could be dropped so $\R^\MW_\infty$ becomes $\R^\MW$.
\end{proof}

This theorem is  illustrated by example~\ref{ex-13-6-34}(i). On the other hand, assume that $\eta$ has a critical point. Let us introduce  $\gamma$-admissible partition of unity with respect to $(x_1,\xi_1)$ with 
\begin{equation}
\gamma= \gamma (x_1,\xi_1, r)\Def 
\max\bigl(\epsilon |\nabla \eta | ,C\bar{\gamma}\bigr).
\label{13-6-102}
\end{equation}
Then the shift during period is of magnitude $\mu^{-1}\gamma$ and uncertainty principle requires $\mu^{-1}\gamma \times \gamma \gg \mu^{-1}h$ or 
$\gamma \gg h^{\frac{1}{2}}$. Therefore selecting $\bar{\gamma}=h^{\frac{1}{2}-\delta}$ we conclude that the contribution of the zone $\{\gamma \ge \bar{\gamma}\}$ to $\R^\MW $ does not exceed $C\mu^{-1}h^{-2}$. Meanwhile contribution of the zone 
$\{\gamma \le \bar{\gamma}\}$ does not exceed 
$Ch^{-2}\mes (\{x:\ |\nabla \eta (x)|\le \bar{\gamma}\})$ which is 
$Ch^{-2}\bar{\gamma}^2$ under assumption (\ref{13-6-107}) below. Then we arrive to  the remainder estimate 
\begin{equation}
\R^\MW \le C\mu^{-1}h^{-2}+ Ch^{\delta -1}
\label{13-6-103}
\end{equation}
with an arbitrarily small exponent $\delta>0$.

One can improve it by using logarithmic uncertainty principle; then we can select $\bar{\gamma}=C(h|\log h|)^{\frac{1}{2}}$ leading us to the remainder estimate
\begin{equation}
\R^\MW \le C\mu^{-1}h^{-2}+ Ch^{-1}|\log h|
\label{13-6-104}
\end{equation}

However we can do better than this. Let us select 
$\bar{\gamma}= Ch^{\frac{1}{2}}$. The following problem is easily accessible by our methods:

\begin{problem}\label{problem-13-6-31}
(i) Prove that as $Q'=Q'(x_1,\mu^{-1}hD_1,j)$, $Q''=Q''(x_1,\mu^{-1}hD_1,j)$  are $\gamma$-admissible elements with $\gamma\ge C_0\bar{\gamma}$ then
\begin{equation}
|F_{t\to h^{-1}\tau}\chi_T(t)\Gamma Q'_{x'}\sU_{j,j}{\,^t\!}Q''_y|
\le C \mu h^{-1} \gamma^2 T \bigl(\frac{\bar{\gamma}}{T \gamma}\bigr)^s\Bigr)
\label{13-6-105}
\end{equation}

\medskip\noindent
(ii) Then prove that the contribution of $\gamma$-element to $\R^\T$ with $T=\epsilon$ does not exceed
\begin{equation}
Ch^{-2}\gamma^2 \Bigl( \mu^{-1} + \bigl(\frac{\bar{\gamma}}{\gamma}\bigr)^s\Bigr);
\label{13-6-106}
\end{equation}
\end{problem}

Then we arrive to the following 

\begin{theorem}\label{thm-13-6-32}
Let all conditions of theorem~\ref{thm-13-6-30} except \textup{(\ref{13-6-98})} be fulfilled. Then under condition
\begin{equation}
|\nabla_{x'} \eta (x',r)|\le \epsilon \implies |\det \Hess_{x'} \eta (x',r) |\ge \epsilon \qquad
\forall x', r\le \bar{r}(x')
\label{13-6-107}
\end{equation}
as $\mu h\le 1$ estimate \textup{(\ref{13-6-101})} holds.
\end{theorem}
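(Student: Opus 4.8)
The plan is to carry out the partition-and-summation scheme sketched before the statement, taking the dynamical estimate of Problem~\ref{problem-13-6-31} as the one genuinely new ingredient. First I would fix $\bar\gamma=Ch^{\frac12}$ and introduce the $\gamma$-admissible partition of unity with respect to $(x_1,\xi_1)$ with $\gamma=\gamma(x_1,\xi_1,r)=\max(\epsilon|\nabla_{x'}\eta|,C\bar\gamma)$ of (\ref{13-6-102}). On an element of the regular zone, where $\gamma\ge C_0\bar\gamma$ and hence $|\nabla_{x'}\eta|\asymp\gamma$ on its support, Problem~\ref{problem-13-6-31} applies and bounds the contribution of the element to $\R^\T$, with $T=\epsilon$, by (\ref{13-6-106}).

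Next I would sum (\ref{13-6-106}) over the partition. By the non-degeneracy hypothesis (\ref{13-6-107}), the band $\{x':|\nabla_{x'}\eta(x',r)|\asymp\gamma\}$ has measure $O(\gamma^2)$ --- it shrinks onto the finitely many non-degenerate critical points of $\eta(\cdot,r)$, uniformly over $r\le\bar r(x')$ --- so the total of $\gamma^2$ over the $\gamma$-elements lying in that band is $O(\gamma^2)$. Hence, summing the first term of (\ref{13-6-106}) over dyadic $\gamma\in[\bar\gamma,1]$ is a geometric series dominated by $\gamma\asymp1$ and gives $O(\mu^{-1}h^{-2})$, while summing the second term gives $Ch^{-2}\bar\gamma^{s}\sum_{\gamma}\gamma^{2-s}\le Ch^{-2}\bar\gamma^2=Ch^{-1}$ once $s>2$. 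In the complementary singular zone $\{|\nabla_{x'}\eta|\le C\bar\gamma\}$ the drift improvement is unavailable, but by (\ref{13-6-107}) this zone has measure $O(\bar\gamma^2)$, and the standard bound $\R^\T\le Ch^{-2}$, restricted to it, contributes at most $Ch^{-2}\bar\gamma^2=Ch^{-1}$. Altogether $\R^\T\le C\mu^{-1}h^{-2}+Ch^{-1}$, and since $\mu h\le1$ forces $h^{-1}\le\mu^{-1}h^{-2}$ this is (\ref{13-6-101}). Finally, passing from $\R^\MW_\infty$ to $\R^\MW$ is the same easy exercise as in the proof of Theorem~\ref{thm-13-6-30}.

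The hard part is Problem~\ref{problem-13-6-31}, i.e.\ estimate (\ref{13-6-105}). The point is that on a $\gamma$-element the only mechanism by which the trace $\Gamma Q'_{x'}\sU_{j,j}{}^{t}Q''_y$ can fail to be negligible at $|t|\le T$ is a near-return of the drift flow, and the drift displacement over time $T$ is $\asymp\mu^{-1}\gamma T$, which defeats that return once $T\gamma\gg\bar\gamma$, producing the gain $(\bar\gamma/(T\gamma))^{s}$. The route is a secondary rescaling $x'\mapsto\gamma^{-1}x'$, with the attendant rescaling of the effective semiclassical parameter and of $T$ and with the magnetron and oscillation variables kept frozen, reducing the claim to the standard propagation and non-stationary phase estimates of sections~\ref{book_new-sect-2-2} and~\ref{book_new-sect-2-4}; the delicate points are tracking how this rescaling interacts with the $O(\mu^{-2})$ errors in (\ref{13-6-91})--(\ref{13-6-92}) and checking that the drift Hamiltonian $\eta$ stays non-degenerate, in the sense of (\ref{13-6-107}), at the rescaled energy level. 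Granting (\ref{13-6-105}), the summation above is routine.
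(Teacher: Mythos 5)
Your proof is correct and follows exactly the route the paper intends: fix $\bar\gamma=Ch^{1/2}$, use the $\gamma$-admissible partition (\ref{13-6-102}), apply Problem~\ref{problem-13-6-31} on the regular zone, sum dyadically using (\ref{13-6-107}) to bound the measure of each band, and dispose of the singular zone by the trivial $O(h^{-2}\bar\gamma^2)$ bound. The paper leaves this summation implicit (``Then we arrive to the following''), so you have simply spelled out the intended argument; your sketch of (\ref{13-6-105}) via secondary rescaling is also consistent with the method the paper gestures at.
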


\begin{remark}\label{remark-13-6-33}
(i) As $\epsilon_0 h^{-1}\le \mu \le h^{-1}$ we need to check conditions (\ref{13-6-98}), (\ref{13-6-107}) only $r= (2j+1)\mu h $; also condition (\ref{13-6-100}) should be checked only as $|V- (2j+1)\mu h F|\le \epsilon$;

\medskip\noindent
(ii) Estimate $o(h^{-2})$ holds for  $\mu h \le 1$ under weak non-degeneracy condition
\begin{equation}
\mes (\{x': \nabla \eta (x', r) =0\})=0\qquad \text{as\ \ }r=(2j+1)\mu h\ \ \forall j;
\label{13-6-108}
\end{equation}
(iii) Estimate $O\bigl(\mu^{-1}h^{-2}+h^{-\frac{3}{2}}\bigr)$ holds for  
$\mu h \le 1$ under intermediate non-degeneracy condition
\begin{multline}
|\nabla_{x'} \eta (x', r)  |\le \epsilon \implies | \Hess_{x'} \eta (x',r) |\ge \epsilon \\
\forall x', r=(2j+1)\mu h, j\in \bZ^+;
\label{13-6-109}
\end{multline}
\end{remark}

\begin{example}\label{ex-13-6-34}
Consider
\begin{equation}
a= (\xi_1-\frac{1}{2}\mu x_2)^2 + (\xi_2+\frac{1}{2}\mu x_1)^2 +V(x).
\label{13-6-110}
\end{equation}
(i) As
\begin{equation}
V= k^2 x_3^2 + l x_1 -\tau \qquad (\tau>0)
\label{13-6-111}
\end{equation}
$k>0$ ensures trapping and $l\ne 0$ ensures condition (\ref{13-6-98}).

\medskip\noindent
(ii) As
\begin{equation}
V= k^2 x_3^2 + l_1 x_1^2+l_2 x_2^2-\tau \qquad (\tau>0)
\label{13-6-112}
\end{equation}
$k>0$ ensures trapping and $l_1\ne 0$, $l_2\ne 0$ ensures condition (\ref{13-6-107}).

\medskip\noindent
(iii) In (ii) assume that $l_1=l_2=l^2>0$; then we have at least two oscillatory movements (with respect to $(x_2,\xi_2)$ and slow oscillatory drift movement with respect to $(x_1,\xi_1)$). Then we can increase $T^*$ to $T^*\gg \mu$ and under appropriate non-commensurability condition we can get remainder estimate $o(\mu^{-1}h^{-2})$ instead of $O(\mu^{-1}h^{-2})$.

As $\mu h\to 0$ the third periodic movement (with respect to $(x_3,\xi_3)$ in the canonical form) plays role and non-commensurability means that at least one of two commensurability conditions should be violated. This is fulfilled automatically for non-commensurable $k$ and $l$ while for commensurable $k$ and $l$ we get one non-commensurability condition to $\mu$. Then we get remainder estimate $o(\mu^{-1}h^{-2})$ instead of $O(\mu^{-1}h^{-2})$.

\medskip\noindent
(iv) Let
\begin{equation}
V=k^2 x_3^2-\tau \qquad (\tau>0).
\label{13-6-113}
\end{equation}
Then there is no drift but while fast rotations have frequency $2\mu$, normal oscillation have frequency $2k$ and unless they are commensurable we get non-periodic movement and as $\mu h\to 0$ we get remainder estimate $o(h^{-2})$ under appropriate assumption to $\mu$ as $\mu h \to +0$.
\end{example}

\begin{Problem}\label{problem-13-6-35}
(i) Investigate case when potential traps only trajectories with $r\ge \bar{r}$ and other trajectories escape. One should expect remainder estimate 
$\R^\MW= o(h^{-2})$;

\medskip\noindent
(ii) Investigate the case when all magnetic lines are periodic and $V/F$ is constant along them.

\medskip\noindent
(iii) Investigate the case when all magnetic lines are periodic and $V/F$ is not constant along them (then trajectories of $\Psi_{r,t}$ with $r\ge \bar{r}$ are trapped and other trajectories are periodic).
\end{Problem}

\begin{figure}[h]
\centering
\subfloat[Oscillation and drift]{\includegraphics[width=0.4\linewidth]{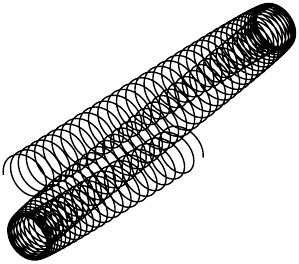}} \qquad
\subfloat[Periodic and drift]{\includegraphics[width=0.4\linewidth]{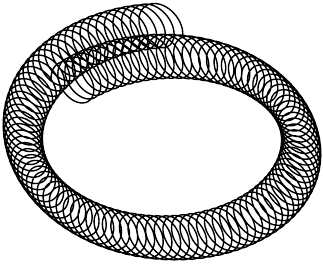}}
\caption{\label{weird} Trajectories for $3$-dimensional model operator:\newline a) Operator (\ref{13-6-110})--(\ref{13-6-111}) b) Operator (\ref{13-6-114})}
\end{figure}

\begin{example}\label{ex-13-6-36}
\begin{equation}
a= \xi_1^2+\xi_2^2 + (\xi_3 -\mu \alpha (|x'|))^2 +w(|x'|)-\tau
\label{13-6-114}
\end{equation}
Then $\mathbf{F}=(-x_2,x_1,0)f(|x'|)|x'|^{-1}$, $F=f(|x'|)$, $f(\rho) =\partial_\rho \alpha >0$ and and all magnetic lines are circles.

\medskip\noindent
(i) Then drift speed is $\partial_{\rho}\eta(\rho) $ with
$\eta = (-w(\rho) -r |f(\rho)|)$ and it is directed along $x_3$. Then under appropriate assumptions ensuring (\ref{13-6-98}) or (\ref{13-6-100}) one can derive corresponding asymptotics.

\medskip\noindent
(ii) As $w(\rho)=-\tau$ and $f(\rho)=\const$ there is no drift and again one can derive asymptotics $o(h^{-2})$ under non-commensurability condition as 
$\mu h\to 0$.
\end{example}

\section{Schr\"odinger-Pauli operator as $\mu h\ge 1$}
\label{sect-13-6-3-3}

Consider now Schr\"odinger-Pauli operator 
\begin{equation}
A=\sum_{j,k} P_jg^{jk}P_k+V-\mu h F,\qquad \text{with\ \ } P_j=hD_j-\mu V_j
\label{13-5-3}
\end{equation}
as $\mu h\ge 1$. We assume that modified condition (\ref{13-6-100})
\begin{equation}
\min_j |\frac{V}{F} -2j\mu h|+ |\nabla \bigl(\frac{V}{F}\bigr)|\le \epsilon \implies
|\det \Hess \bigl(\frac{V}{F}\bigr)|\ge \epsilon
\tag*{$\textup{(\ref*{13-6-100})}'$}\label{13-6-100-'}
\end{equation}
is fulfilled. Then exactly the same arguments as in the proofs of theorems \ref{thm-13-6-30}, \ref{thm-13-6-32} lead us to the following 

\begin{theorem}\label{thm-13-6-37}
Consider Schr\"odinger-Pauli operator \textup{(\ref{13-5-3})} with $\mu h\ge 1$. 
Let conditions \textup{(\ref{13-1-1})}--\textup{(\ref{13-1-5})}, \textup{(\ref{book_new-13-2-1})}, \textup{(\ref{13-3-46})} and \textup{(\ref{13-3-54})} be fulfilled.

Further, let conditions \textup{(\ref{13-6-89})},  \textup{(\ref{13-6-90})} and \ref{13-6-100-'} be fulfilled. Then

\medskip\noindent
(i) Under assumption
\begin{equation}
|\nabla_{x'} \eta (x',r)|\ge \epsilon_0 \qquad
\forall x', r\le \bar{r}(x')\qquad  \forall x', r=2j\mu h,\  j\in \bZ^+
\tag*{$\textup{(\ref*{13-6-98})}'$}\label{13-6-98-'}
\end{equation}
estimate 
\begin{equation}
\R^\MW_1 \le C h^{-1} +C'_s \mu h^s
\label{13-6-115}
\end{equation}
where $\R^\MW_1$ is due to the perturbation term $\asymp h^2$ in the canonical form\footnote{\label{foot-13-45} See remark~\ref{book_new-rem-13-5-13}(ii). We should not care about this before as the sharpest remainder estimate was $O(\mu h^{-1+\delta})$.};

\medskip\noindent
(ii) Estimate
\begin{equation}
\R^\MW  \le C \mu 
\label{13-6-116}
\end{equation}
holds under condition 
\begin{multline}
|\nabla_{x'} \eta (x',r)|\le \epsilon \implies |\det \Hess_{x'} \eta (x',r) |\ge \epsilon \\
\forall x', r=2j\mu h,\  j\in \bZ^+
\tag*{$\textup{(\ref*{13-6-107})}'$}\label{13-6-107-'}
\end{multline}
\end{theorem}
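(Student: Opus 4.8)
\textbf{Proof proposal for Theorem~\ref{thm-13-6-37}.}
The plan is to recycle the machinery used for Theorems~\ref{thm-13-6-30} and~\ref{thm-13-6-32}, substituting the Schr\"odinger–Pauli spectral shift and the quantization condition $V/F = 2j\mu h$ (rather than $(2j+1)\mu h$) that comes from the vanishing zero-point energy of the Pauli operator at $\mu h\ge 1$. First I would set up the canonical form as in section~\ref{sect-13-6-3-1}: the fast magnetron oscillation in $(x_3,\xi_3)$, the intermediate oscillation in $(x_2,\xi_2)$, and the slow drift in $(x_1,\xi_1)$, with drift speed $\mu^{-1}H'_{b_0+b_1 r}$ and the reduced quantity $\eta(x',r)$ from~(\ref{13-6-92})/(\ref{13-6-99}). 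The only structural change is that the principal part is now $\asymp \mu h^{-2}$ and the relevant Landau levels are indexed so that $r=2j\mu h$; since $\mu h\ge 1$ there are only $O(1)$ such levels in $B(0,1)$ compatible with~(\ref{13-3-46}), so all estimates are made level by level and then summed over a bounded number of $j$'s, exactly as in remark~\ref{remark-13-6-33}(i).

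For part (i), condition \ref{13-6-98-'} says the drift is nonvanishing, hence the standard $T^*\asymp 1$ is upgraded to $T^*\asymp\mu$ with no new singularities, just as in the discussion preceding Theorem~\ref{thm-13-6-30}. The term $\R^\MW_1$ is generated solely by the $\asymp h^2$ perturbation in the canonical form (cf.\ footnote~\ref{foot-13-45} and remark~\ref{book_new-rem-13-5-13}(ii)); its contribution without any drift improvement would be $O(h^{-1})$ times the number of contributing levels, i.e.\ $O(h^{-1})$, and the $T^*\asymp\mu$ extension does not affect this particular term. The residual contribution from the outer zone of the $(x_2,\xi_2)$–$(x_3,\xi_3)$ canonical reduction is controlled by the standard rescaling argument and produces the $C'_s\mu h^s$ term for arbitrary $s$. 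So part (i) is essentially a bookkeeping exercise identical to the one underlying Theorem~\ref{thm-13-6-30}, with the extra factor $\mu$ in the principal symbol tracked through.

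For part (ii), when $\eta$ is allowed to have critical points I would introduce the $\gamma$-admissible partition of unity in $(x_1,\xi_1)$ with $\gamma=\max(\epsilon|\nabla\eta|, C\bar\gamma)$ exactly as in~(\ref{13-6-102}), solve Problem~\ref{problem-13-6-31} in this setting (the estimate~(\ref{13-6-105}) and hence the per-element bound~(\ref{13-6-106}) go through verbatim, with $\mu h^{-1}$ replaced by $\mu\cdot\mu h^{-1}=\mu^2 h^{-1}$ in the propagator normalization but the \emph{relative} gains unchanged), and sum over elements. Choosing $\bar\gamma=Ch^{1/2}$, the zone $\{\gamma\ge\bar\gamma\}$ contributes $\le C\mu^{-1}\cdot(\mu h^{-2})=C\mu h^{-2}\cdot h^0$... — more precisely, tracking the extra $\mu$ in the principal part, the drift-improved contribution is $O(\mu)$ rather than $O(\mu^{-1}h^{-2})$ because here $\mu h\ge1$ forces $\mu^{-1}h^{-2}\le\mu$; and the near-critical zone $\{\gamma\le\bar\gamma\}$ has measure $O(\bar\gamma^2)=O(h)$ under the Hessian non-degeneracy \ref{13-6-107-'}, contributing $O(h^{-2}\cdot h\cdot\mu)=O(\mu h^{-1})\le O(\mu)$ as well. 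Summing the $O(1)$ levels gives~(\ref{13-6-116}). The main obstacle I anticipate is the careful reconciliation of the two competing small parameters in the regime $\mu h\ge 1$: one must verify that after the $T^*\asymp\mu$ upgrade the ``naive'' remainder $\mu^{-1}\times(\text{principal part})=\mu^{-1}\cdot\mu h^{-2}=h^{-2}$ is genuinely dominated by the target $O(\mu)$, which requires $\mu\ge h^{-1}$, i.e.\ precisely $\mu h\ge1$ — so the proof only closes because the hypothesis $\mu h\ge1$ exactly matches the threshold at which the drift mechanism becomes efficient enough; getting every power of $\mu$ and $h$ to line up through the canonical-form reduction, the partition of unity, and the summation over Landau levels is the delicate part.
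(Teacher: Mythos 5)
Your plan of attack is exactly what the paper intends: the text immediately before Theorem~\ref{thm-13-6-37} says that ``exactly the same arguments as in the proofs of theorems~\ref{thm-13-6-30},~\ref{thm-13-6-32}'' apply, and the paper's own ``proof'' consists of the single sentence ``We leave details to the reader.'' So adapting the $T^*\asymp 1\to T^*\asymp\mu$ drift upgrade and the $\gamma$-partition of~(\ref{13-6-102})--(\ref{13-6-107}) to the Pauli setting, with the Landau-level condition shifted from $r=(2j+1)\mu h$ to $r=2j\mu h$ as in remark~\ref{remark-13-6-33}(i), is the right skeleton.

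However, in part (ii) your bookkeeping of the near-critical zone is off, and the error matters. You write that the zone $\{\gamma\le\bar\gamma\}$ with $\bar\gamma=Ch^{1/2}$ contributes $O(h^{-2}\cdot h\cdot\mu)=O(\mu h^{-1})\le O(\mu)$. The inequality $\mu h^{-1}\le\mu$ is backwards: since $h\le 1$ one has $\mu h^{-1}\ge\mu$, so what you have actually produced is a bound of $O(\mu h^{-1})$, which is \emph{larger} than the claimed $O(\mu)$ and would not close the argument. The source of the discrepancy is that you are feeding in the magnetic \emph{state} density $\asymp\mu h^{-2}$ (i.e.\ you carry the full $h^{-2}$ from the $\mu h\le 1$ case and then tack on an extra factor $\mu$), whereas the quantity that multiplies $\bar\gamma^2$ in the argument preceding Theorem~\ref{thm-13-6-32} is the per-unit-area \emph{remainder} density, which for $\mu h\le 1$ is $h^{-2}$ and for the Pauli case $\mu h\ge 1$ becomes $\mu h^{-1}$ (these match at the transition $\mu h=1$, as they should). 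With the correct density, the near-critical zone contributes $C\mu h^{-1}\cdot\bar\gamma^2=C\mu h^{-1}\cdot h=C\mu$, which is the bound~(\ref{13-6-116}). Likewise, the drift-improved outer-zone contribution should be the remainder density divided by $\mu$, namely $O(h^{-1})$, rather than the $\mu^{-1}h^{-2}$ you obtain from the same over-counted density (that happens to be a harmless over-tightening there, but reflects the same confusion). You were right to flag the power-tracking as the delicate point; fixing the density in the near-critical computation is what makes the numbers come out.
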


\begin{proof}
We leave details to the reader.
\end{proof}

\begin{remark}\label{rem-13-6-38}
(i) Estimate $\R^\MW =o(\mu h^{-1})$ holds  under weak non-degeneracy condition
\begin{equation}
\mes (\{x': \nabla \eta (x', r) =0\})=0\qquad \text{as\ \ }r=2j\mu h\ \ \forall j;
\tag*{$\textup{(\ref*{13-6-108})}'$}\label{13-6-108-'}
\end{equation}

\medskip\noindent
(ii) Estimate $\R^\MW=O(\mu h^{-\frac{1}{2}})$ holds under intermediate non-degeneracy condition
\begin{multline}
|\nabla_{x'} \eta (x', r)  |\le \epsilon \implies | \Hess_{x'} \eta (x',r) |\ge \epsilon \\
\forall x', r=2j\mu h, j\in \bZ^+;
\tag*{$\textup{(\ref*{13-6-109})}'$}\label{13-6-109-'}
\end{multline}
\end{remark}

\section{Schr\"odinger-Pauli operator as $\mu h\ge 1$ reloaded}
\label{sect-13-6-3-4}

While most likely one can improve (\ref{13-6-115})  to 
\begin{equation}
\R^\MW_1 \le C h^{-1} +C'  \mu \exp (\epsilon h^{-1})
\label{13-6-117}
\end{equation}
we still want to improve it and (\ref{13-6-116}) further. To do this we consider operator in the reduced form; as $\mu h\ge C$ we need to consider only operator
$\cA_0(x_1,\hslash D_1; x_2, hD_2)$ with $\hslash=\mu^{-1}h$ and we consider it as $\hslash$-pseuodo-differential operator with an operator-valued symbol $\mathbf{a}(x_1,\xi_1; x_2,hD_2)$ acting in the axillary space $\bH=\sL(\bR_{x_2})$. Then under condition (\ref{13-6-98}) it is microhyperbolic which leads us to statements (i)--(ii)  of the following

\begin{theorem}\label{thm-13-6-39}
(i) Let conditions of theorem~\ref{thm-13-6-37}(i) be fulfilled. Then as 
$h\in (\mu^{-1},h_0)$ where $h_0$ is small enough constant
\begin{multline}
\R^\MW_2\Def \\ |\int \Bigl(e(x,x,0)- (2\pi )^{-1}\mu h^{-1} \mathsf{e}(x_1,x_2;x_3,x_3;0)\Bigr)\psi(x)\,dx|
\le Ch^{-1}
\label{13-6-118}
\end{multline}
where $\mathbf{e}(x';x_3,y_3;\tau)$ is the Schwartz kernel of the spectral projector  $\mathbf{e}(x';\tau)$ of operator 
$\mathbf{a}(x')=\mathbf{a}(x_1,x_2; x_3,hD_3)$ and as usual we identify objects depending on $x$ and on $(x_1,\xi_1,x_2)$ through $\bar{\Psi}$;

\medskip\noindent
(ii) In particular, for $\psi=\psi(x')$ 
\begin{equation}
\R^\MW_2= |\int \Bigl(\int e(x,x,0)\,dx_3 - (2\pi )^{-1}\mu h^{-1} \mathsf{n}(x_1,x_2;0)\Bigr)\psi(x')\,dx'|
\label{13-6-119}
\end{equation}
where $\mathbf{n}(x';\tau)=\Tr_{x_3}\mathbf{e}(x';\tau) $ is an eigenvalue counting function of operator $\mathbf{a}(x')$.
\end{theorem}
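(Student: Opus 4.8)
The plan is to follow exactly the strategy sketched for Theorems~\ref{thm-13-6-30}--\ref{thm-13-6-32}, but now tracking the trace of the operator-valued symbol rather than working with a fixed quantum number $j$. First I would invoke the reduction to the canonical form recalled in Section~\ref{sect-13-6-3-1}: as $\mu h\ge C$ the full operator $A$ is microlocally equivalent to $\cA_0(x_1,\hslash D_1;x_2,hD_2)$ with $\hslash=\mu^{-1}h$, realized as an $\hslash$-pseudodifferential operator with operator-valued symbol $\mathbf{a}(x_1,\xi_1;x_3,hD_3)$ acting on $\bH=\sL(\bR_{x_3})$. The spectral projector of $A$ then has kernel $e(x,x,0)$ which, after the identification through $\bar{\Psi}$, is expressed via the kernel $\mathbf{e}(x';x_3,y_3;0)$ of the spectral projector $\mathbf{e}(x';0)$ of $\mathbf{a}(x')$; the Weyl-type principal term $(2\pi)^{-1}\mu h^{-1}\mathbf{e}(x_1,x_2;x_3,x_3;0)$ is precisely the leading contribution of the $\hslash$-quantization of $\mathbf{e}(\cdot)$, so $\R^\MW_2$ measures the $\hslash$-pseudodifferential remainder. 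Statement~(ii) is then the immediate specialization: integrating in $x_3$ replaces $\mathbf{e}(x';x_3,x_3;0)$ by $\mathbf{n}(x';0)=\Tr_{x_3}\mathbf{e}(x';0)$.

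The core estimate is the propagation argument. Under condition \ref{13-6-98-'} (the hypothesis inherited from Theorem~\ref{thm-13-6-37}(i)), the symbol $\mathbf{a}(x')$ is microhyperbolic in the $(x_1,\xi_1)$ variables, with the microhyperbolicity direction supplied by $\nabla_{x'}\eta$; this is the statement already used, at the level of individual $j$, in \ref{13-6-98} and \textup{(\ref{13-6-97})}, now read off uniformly in $r$ along the spectrum. Microhyperbolicity gives the standard propagation of singularities and the bound on $F_{t\to h^{-1}\tau}\chi_T(t)\Gamma\bigl(Q'_{x'}\,\mathbf{u}\,{}^t\!Q''_y\bigr)$ for $|t|\le T^*$; here, crucially, $T^*\asymp\mu$ rather than $T^*\asymp 1$, exactly because the magnetic drift has speed $O(\mu^{-1})$ and condition \ref{13-6-98-'} guarantees the drift is non-degenerate and observable over that whole time scale (this is the upgrade established at the end of Section~\ref{sect-13-6-3-1}). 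Feeding $T_*=h^{1-\delta}$, $T^*\asymp\mu$ into the usual Tauberian machinery — applied to the $\hslash$-pseudodifferential operator with operator-valued symbol, so that the standard two-dimensional remainder $Ch^{-2}$ for $(x_1,\xi_1)$-phase-space gets divided by $T^*/T_*\asymp\mu$ modulo logarithmic losses absorbed by the logarithmic uncertainty principle — yields $\R^\MW_2\le Ch^{-1}$ on the range $h\in(\mu^{-1},h_0)$. One must separately check that the boundary strip $|x_3|\ge C_0$, where $V/F>0$ by \textup{(\ref{13-6-89})}, contributes only an exponentially small error (ellipticity), and that the non-degenerate minimum $z^0(x')$ governed by \textup{(\ref{13-6-90})} does not spoil the trace computation; both are routine.

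The main obstacle I anticipate is the bookkeeping of the operator-valued symbol: one needs the Tauberian remainder estimate and the propagation bound to hold \emph{uniformly} in the auxiliary variables $(x_3,hD_3)$, i.e. for the full operator $\mathbf{a}(x')$ rather than for one Landau-type eigenvalue at a time, and uniformly across all $r=2j\mu h$ in the relevant range. This requires that the microhyperbolicity constant in \ref{13-6-98-'} be uniform in $r\le\bar r(x')$, and that the contributions of the different spectral bands do not accumulate; the summation over $j$ produces at most a factor $O(\mu h^s)$-type tail (as in \textup{(\ref{13-6-115})}) which is harmless here. Everything else — the reduction, the Tauberian argument, the passage from $e(x,x,0)$ to the symbol kernel via $\bar\Psi$ — is by now standard in this framework, so the proof is genuinely a matter of assembling the pieces, which is why the details can safely be left to the reader as in Theorem~\ref{thm-13-6-37}.
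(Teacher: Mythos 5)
Your proposal correctly identifies the reduction to an $\hslash$-pseudodifferential operator with operator-valued symbol, the microhyperbolicity in $(x_1,\xi_1)$ coming from condition~\ref{13-6-98-'}, and the upgrade of the propagation time scale to $T^*\asymp\mu$ thanks to the drift speed $\asymp\mu^{-1}$. These are indeed the correct starting points, and part~(ii) is, as you say, a trivial specialization of part~(i).

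There is, however, a genuine gap at the center of the argument. The Tauberian machinery, by itself, only bounds the discrepancy between $e(x,x,0)$ and the \emph{Tauberian} expression; Theorem~\ref{thm-13-6-39} is about the discrepancy between $e(x,x,0)$ and the \emph{operator-valued magnetic Weyl} expression $(2\pi)^{-1}\mu h^{-1}\mathsf{e}(x_1,x_2;x_3,x_3;0)$. You assert that this second comparison ``is by now standard'' without naming the mechanism, but this is precisely the non-routine step. The paper's route is to compute the Tauberian expression by successive approximations with the unperturbed operator \emph{frozen at} $x_1=y_1$; because the propagation speed in $(x_1,\xi_1)$ is $\asymp\mu^{-1}$ one has $(x_1-y_1)=O(\mu^{-1}T_*)$ on the relevant time interval, which is what makes the successive-approximation error controllable and which forces the choice $T_*\asymp\mu^\delta h$ (not $T_*=h^{1-\delta}$, as you have it — for $\mu\gg h^{-1}$ these are not interchangeable, and the smallness of $(x_1-y_1)$ is tied to $T_*$). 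Without naming the freezing/perturbation scheme, your ``Tauberian machinery yields $\R^\MW_2\le Ch^{-1}$'' is a claim, not a derivation; moreover the intermediate arithmetic ``the standard two-dimensional remainder $Ch^{-2}$ gets divided by $T^*/T_*\asymp\mu$'' does not hold with your own parameters, since $T^*/T_*\asymp\mu h^{\delta-1}\ne\mu$.

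A secondary omission: you quote the restriction $h\in(\mu^{-1},h_0)$ but do not say why it is needed. The reason, pointed out in the paper, is that $\eta$ is really $\eta(x_1,\xi_1,h)$; to keep $|\nabla\eta|\asymp 1$ uniformly — hence microhyperbolicity with a uniform constant — one needs $h$ bounded away from both $0$ and $\mu^{-1}$ in the stated way. This is not decorative: without it the microhyperbolicity direction may degenerate and the whole $T^*\asymp\mu$ propagation argument breaks down.
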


\begin{proof}
Proof is standard: as the propagation speed with respect to $(x_1,\xi_1)$ is $\asymp \mu^{-1}$ we can take $T^*\asymp \mu$ and $T_*\asymp \mu^\delta h$ with an arbitrarily small exponent $\delta>0$ and then we can launch successive approximations with unperturbed operator frozen as $x_1=y_1$; then $(x_1-y_1)=O(\mu^{-1}T_*)$.

We leave details to the reader. Note only that we need $h\in (\mu^{-1},h_0)$ because actually $\eta =\eta (x_1,\xi_1,h)$ and to have $|\nabla \eta |\asymp 1$ we need the above assumption.
\end{proof}

Assume now that condition (\ref{13-6-107}) is fulfilled instead. 

\begin{theorem}\label{thm-13-6-40}
Let conditions of theorem~\ref{thm-13-6-37}(ii) be fulfilled. Let 
$h\in (\mu^{-1},h_0)$. Then  estimate 
\begin{equation}
\R^\MW_2 \le Ch^{-1} |\log \mu|
\label{13-6M-117}
\end{equation}
with an arbitrarily small exponent holds.
\end{theorem}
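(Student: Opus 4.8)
The plan is to combine the microhyperbolicity-in-the-bulk argument of Theorem~\ref{thm-13-6-39} with a $\gamma$-admissible partition of unity near the critical set of $\eta$, exactly as in the passage from Theorem~\ref{thm-13-6-30} to Theorem~\ref{thm-13-6-32}, but now carried out for the reduced operator $\mathbf{a}(x_1,\xi_1;x_3,hD_3)$ treated as an $\hslash$-pseudodifferential operator with $\hslash=\mu^{-1}h$. First I would fix the scale $\gamma=\gamma(x_1,\xi_1,r)=\max(\epsilon|\nabla_{x'}\eta|,C\bar\gamma)$ with $\bar\gamma$ to be chosen, and split $\R^\MW_2$ into the contribution of the \emph{regular zone} $\{\gamma\ge\bar\gamma\}$ and the \emph{critical zone} $\{\gamma\le\bar\gamma\}$. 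In the regular zone the drift per period is $\asymp\mu^{-1}\gamma$, so after rescaling $(x_1,\xi_1)$ by $\gamma^{-1}$ the effective semiclassical parameter becomes $\hslash_{\eff}\asymp h/\gamma^2$, the propagation speed in $(x_1,\xi_1)$ is $\asymp\mu^{-1}$, and Theorem~\ref{thm-13-6-39}(i) (or rather its local, $\gamma$-scaled version) applies verbatim: one takes $T^*\asymp\mu$ and $T_*\asymp\mu^\delta h$ and gets a contribution $O(h^{-1}\gamma^2)$ to $\R^\MW_2$ from that $\gamma$-element, provided $\gamma^2\ge Ch$, i.e. $\bar\gamma\ge C h^{1/2}$. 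Summing over the partition gives $O(h^{-1})$ from the whole regular zone, since $\sum\gamma^2\times(\text{number of elements at scale }\gamma)$ telescopes to $O(1)$ after integrating $d x_1 d\xi_1$.

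The contribution of the critical zone is estimated by the trivial (Weyl) bound $Ch^{-1}\mu h^{-1}\cdot\mu^{-1}h=Ch^{-1}$ per unit volume times the measure of $\{\,|\nabla_{x'}\eta(\cdot,r)|\le\bar\gamma\,\}$; under the non-degeneracy hypothesis (\ref{13-6-107}) of Theorem~\ref{thm-13-6-37}(ii), i.e. $|\det\Hess_{x'}\eta|\ge\epsilon$ where $|\nabla_{x'}\eta|\le\epsilon$, this measure is $O(\bar\gamma^2)$. Hence the critical zone contributes $O(h^{-2}\bar\gamma^2)$ in the crudest count; but we can do better by also exploiting that inside the critical zone, after the quadratic (Morse) normal form for $\eta$ near a critical point, there is still an \emph{oscillatory drift} movement in $(x_1,\xi_1)$ whose period is $\asymp 1$ and which allows $T^*\asymp\mu$ there too, losing only where the linearized drift itself degenerates. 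This is the step that forces a logarithmic loss: iterating the dyadic decomposition $\bar\gamma=2^{-k}$ down to $\bar\gamma\asymp h^{1/2}$ produces $|\log h|$, and since $h\in(\mu^{-1},h_0)$ we have $|\log h|\le|\log\mu|+C$, so the bound reads $Ch^{-1}|\log\mu|$; the "arbitrarily small exponent" in the statement refers to the freedom in choosing $\bar\gamma=h^{1/2-\delta}$ versus $h^{1/2}|\log h|^{1/2}$, which trades the logarithm for $h^{-\delta}$ or keeps it, respectively — one states it in the sharpest form after the logarithmic uncertainty principle is applied.

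Concretely the steps in order are: (1) set up the $\gamma$-admissible partition in $(x_1,\xi_1)$ with $\gamma$ as in (\ref{13-6-102}) and $\bar\gamma=Ch^{1/2}$ (or $Ch^{1/2-\delta}$); (2) on each $\gamma$-element with $\gamma\ge C_0\bar\gamma$, rescale to reduce to the situation of Theorem~\ref{thm-13-6-39}(i) and obtain the contribution $O(h^{-1}\gamma^2)$ — this is essentially Problem~\ref{problem-13-6-31}(i)--(ii) re-read for the reduced operator, giving the bound (\ref{13-6-106}) with the $\mu^{-1}$ term absorbed since $\mu h\ge 1$; (3) sum over the regular zone to get $O(h^{-1})$; (4) handle the critical zone using condition (\ref{13-6-107}) and the Morse normal form, with a secondary dyadic decomposition and the logarithmic uncertainty principle, producing the $O(h^{-1}|\log\mu|)$ remainder; (5) assemble. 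The main obstacle is step (4): controlling the propagation for the reduced operator uniformly as one approaches the critical manifold of $\eta$, where the effective drift vanishes and one must simultaneously track the fast $(x_3,\xi_3)$ magnetron oscillation, the $(x_2,\xi_2)$ oscillation already integrated into $\eta$, and the now-slow-and-degenerate $(x_1,\xi_1)$ motion; making the successive-approximation scheme close with error terms that sum to only a logarithm, rather than a power, is exactly where the $|\log\mu|$ is unavoidable and where the proof requires genuine care rather than a routine reference. I expect the rest to be routine and would leave the verification of the microhyperbolicity estimates and the bookkeeping of the partition to the reader, as the author does for the neighbouring theorems.
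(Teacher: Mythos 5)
Your high-level plan — a $\gamma$-admissible partition in $(x_1,\xi_1)$ keyed to $|\nabla\eta|$, propagation/successive approximations for the reduced operator on the regular zone, and a measure-theoretic bound on the critical zone via~(\ref{13-6-107}) — is the paper's plan. But the scales you use are wrong for the ``reloaded'' setting, and the logarithm is attributed to the wrong mechanism.

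First, you rescale $(x_1,\xi_1)$ by $\gamma^{-1}$ and claim the effective semiclassical parameter becomes $h/\gamma^2$. In section~\ref{sect-13-6-3-4} the operator has been reduced to an $\hslash$-pseudodifferential operator in $(x_1,\xi_1)$ with $\hslash=\mu^{-1}h$, so the effective parameter after rescaling is $\hslash/\gamma^2=\mu^{-1}h/\gamma^2$, not $h/\gamma^2$. Consequently your cutoff $\bar\gamma=Ch^{1/2}$ is on the wrong scale: here $h\in(\mu^{-1},h_0)$ with $h_0$ a fixed constant, so $h^{1/2}$ can be of order $1$ and the ``critical zone'' $\{\gamma\le\bar\gamma\}$ would swallow everything. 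The paper instead takes $\bar\gamma\asymp\mu^{-1/2}|\log\mu|$, i.e.\ the threshold is governed by $\mu^{-1}$ (essentially $\hslash$ since $h\le h_0$), not by $h$. Second, the paper does not use a fixed $T_*\asymp\mu^\delta h$; it takes $T_*\asymp|\log\mu|\,h/\gamma^2$, $\gamma$-dependent, so that the freeze-at-$x_1=y_1$ approximation has shift $(x_1-y_1)=O(\mu^{-1}\gamma T_*)=O(\hslash|\log\mu|/\gamma)\ll\gamma$ on the regular zone. Your choice of $T_*$ ignores that the admissible propagation time grows as $\gamma$ shrinks.

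Third, the critical zone. You imagine a Morse normal form for $\eta$, a secondary dyadic decomposition, and continued propagation inside the degenerate region, and you locate the $|\log\mu|$ there. None of that is in the paper: the contribution of $\{\gamma\le\bar\gamma\}$ is simply bounded by the trivial estimate $C\mu h^{-1}\bar\gamma^2$ (Weyl density $\asymp\mu h^{-1}$ times the measure of the set, which is $O(\bar\gamma^2)$ by~(\ref{13-6-107})), and with the $\bar\gamma$ chosen above this is $O(h^{-1}|\log\mu|)$ at once. The logarithm therefore arises from the (logarithmic) uncertainty-principle cutoff $\bar\gamma$, not from summing a flat-per-scale dyadic decomposition (which, as your own $O(h^{-1}\gamma^2)$-per-element estimate shows, would telescope to $O(h^{-1})$ without any log). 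In short: you need $\hslash=\mu^{-1}h$ in place of $h$ throughout the $(x_1,\xi_1)$ analysis, a $\gamma$-dependent $T_*$, a $\mu$-scaled $\bar\gamma$, and a crude (not propagative) critical-zone bound; as written, the proposal both uses incorrect scales and misidentifies where the $|\log\mu|$ comes from.
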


\begin{proof}
Proof is standard. Consider $\eta=\eta (x_1,\xi_1,h)$ which as  
$h\in (\mu^{-1},h_0)$ has non-degenerate critical point.

Then propagation speed with respect to $(x_1,\xi_1)$ speed is 
$\asymp \mu^{-1}\gamma $ we can take $T^*\asymp \mu  $ and 
$T_*\asymp |\log \mu|  h/\gamma^2$  and then we can launch successive approximations with unperturbed operator frozen as $x_1=y_1$; then $(x_1-y_1)=O(\mu^{-1}\gamma T_*)$. Here we must assume that $\gamma \ge \bar{\gamma}=\mu^{-\frac{1}{2}}|\log \mu|$ and contribution to the remainder of the zone $\{\gamma \le \bar{\gamma}\}$ does not exceed 
$C\mu h^{-1}\bar{\gamma}^2$.

We leave details to the reader. 
\end{proof}

\begin{example}\label{ex-13-6-41}
(i) Example~\ref{ex-13-6-34}(i),(ii) illustrates theorems \ref{thm-13-6-39}, \ref{thm-13-6-40}.

\medskip\noindent
(ii) In particular, consider example~\ref{ex-13-6-34}(ii). Then (as $\psi=1$) the number of eigenvalues below $0$ equals to the number of the lattice points 
$\{ (2i+1) l \mu^{-1}h ,(2j+1)k h\}$ in the triangle $\{z_1\ge 0, z_2\ge 0, z+1+z_2<\tau\}$. Transition to ``Weyl'' expression  with respect to $(x_1,\xi_1)$ means that we replace summation with respect to $i$ by integration thus making an error $O(1)$ for each $j$ and the total error $O(h^{-1})$, resulting in the expression
$\const \cdot \mu h^{-1} \sum_j \bigl(\tau -(2j+1)h\bigr)_+$.
\end{example}

\section{Introducing boundary}
\label{sect-13-7-3-2}

According to section~\ref{book_new-sect-13-7} of \cite{futurebook} as boundary does not break remainder estimate $O(h^{-2}+\mu h^{-1})$ as $d=3$. Let us discuss sharper remainder estimates.

Magnetic billiards must follow magnetic lines; but magnetic lines are not billiards and all of them bounce back from the boundary going the same path in the opposite direction. So escape condition in the bounded domain fails for sure but these magnetic lines drift. 

Consider what happens as Hamiltonian trajectory reflects from the boundary. We need to analyze only the following
\begin{example}\label{ex-13-7-12}
Let 
\begin{equation}
a(x,\xi)= \frac{1}{2}\bigl(x_1^2 + (\xi_2-\mu x_1)^2+ \xi_3^2\bigr)
\label{13-7-40}
\end{equation}
and $X=\{x: x_3>k x_1\}$ i.e. we assume that magnetic lines are transversal to $\partial X$.

Then before reflection movement is described by 
\begin{multline}
x_1= \mu^{-1}\rho \sin (\mu t)+\bar{x}_1, \quad x_2=\mu^{-1}\rho \cos (\mu t) +\bar{x}_2, \quad x_3=\sigma t+\bar{x}_3,\\
\xi_1= \rho \cos (\mu t), \quad \xi_2= \mu \bar{x}_1, \quad \xi_3=\sigma
\label{13-7-41}
\end{multline}
with constant $\bar{x}$, $\rho\ge 0$, $\sigma$ and after reflection movement is described by $\textup{(\ref{13-7-41})}'$ with parameters $\bar{x}$, $\rho'$, $\sigma'$ and with $t$ replaced by $t'$.

One can prove easily that 
\begin{gather}
\rho^2+\sigma^2 = \rho^{\prime\,2}+\sigma^{\prime\,2}\label{13-7-42}\\
\shortintertext{and}
\bar{x}_1'=\bar{x}_1,\quad,\bar{x}_2'=\bar{x}_2-2\mu^{-1}l k, \quad \sigma'=\sigma -2l\label{13-7-43}
\end{gather} 
with  $l\in [\sigma-\rho|k|,\sigma+\rho |k|]$ the cosine of the incidence angle.

So, if $k=0$ i.e. $\mathbf{F}$ is orthogonal to $\partial X$ nothing really happens: $\bar{x}'=\bar{x}$, $\rho'=\rho$ and $\sigma'=-\sigma$.

However if $k\ne 0$ i.e. $\mathbf{F}$ is not orthogonal to $\partial X$, $\bar{x}$ jumps by $-2\mu^{-1}l k$ in the direction 
$\mathbf{F}^\perp \cap T\partial X$ and also energy between movement along magnetic line and winding redistributes thus magnetic number $j$ which has sense only away from the boundary, jumps by $O((\mu h)^{-1}k)$. 
\end{example}

Therefore even calculation of the total jump seems to be a difficult problem; most likely there is no consistent movement but just a wobbling.

Microlocal implications especially as $\mu$ is close to $h^{-1}$  seem to be really unclear. This leads to an interesting and very challenging 

\begin{Problem}\label{problem-13-7-13}
Repeat analysis of subsection~\ref{sect-13-6-3} and recover remainder estimate  $O(\mu^{-1}h^{-2})$ as $\mu h \le 1$ under appropriate assumptions:

\medskip\noindent
(i) Either magnetic drift of trajectories is larger than the possible effect of shifting

\medskip\noindent
(ii) Or movement along magnetic line is bounded by $\partial X$ only on one side and by a growing potential on the other; then jumps are going only in one direction and drift direction is disjoint from it.

\medskip\noindent
(iii) Consider Schr\"odinger-Pauli operator as $\mu h\ge 1$ and recover remainder estimate  $O(h^{-1})$ under similar assumptions.
\end{Problem}

\begin{Problem}\label{problem-13-7-14}
In the general case as $\mu h\to 0$ prove remainder estimate $o(h^{-2})$.
\end{Problem}

\bibliographystyle{alpha}

\begin{thebibliography}{BrIvr}






\bibitem[Ivr1]{Ivr1}
\textsc{V.~Ivrii}.
 \emph{Microlocal Analysis and Precise Spectral Asymptotics}, 
 Springer-Verlag, SMM, 1998, xv+731.


\bibitem[Ivr2]{futurebook}
\textsc{V.~Ivrii}.
 \emph{Microlocal Analysis and Sharp Spectral Asymptotics}, 
 in progress: available online at \newline
\href{http://www.math.toronto.edu/ivrii/futurebook.pdf}{http://www.math.toronto.edu/ivrii/futurebook.pdf}
 




%
 \end{thebibliography}

\providecommand{\bysame}{\leavevmode\hbox to3em{\hrulefill}\thinspace}

\vglue .06truein

\begin{tabular}{rrl}
&{\hskip 200 pt} &Department of Mathematics,\cr
&&University of Toronto,\cr
&&40, St.George Str.,\cr
&&Toronto, Ontario M5S 2E4\cr
&&Canada\cr
&&ivrii@math.toronto.edu\cr
&&Fax: (416)978-4107\cr
\end{tabular}

\end{document}